\documentclass[12pt]{amsart}
\usepackage{amsmath}
\usepackage{amssymb}
\usepackage[all]{xy}
\usepackage{longtable}
\usepackage[osf,sc]{mathpazo}
\usepackage{euscript}

\setlength{\topmargin}{0truein} \setlength{\headheight}{.25truein}
\setlength{\headsep}{.25truein} \setlength{\textheight}{9.25truein}
\setlength{\footskip}{.25truein} \setlength{\oddsidemargin}{0truein}
\setlength{\evensidemargin}{0truein}
\setlength{\textwidth}{6.5truein} \setlength{\voffset}{-0.625truein}
\setlength{\hoffset}{0truein}

\newtheorem{theorem}[equation]{Theorem}
\newtheorem{lemma}[equation]{Lemma}
\newtheorem{proposition}[equation]{Proposition}
\newtheorem{corollary}[equation]{Corollary}
\newtheorem{conjecture}[equation]{Conjecture}
\newtheorem{definition}[equation]{Definition}

\theoremstyle{remark}
\newtheorem{remark}[equation]{Remark}

\numberwithin{equation}{subsection}

%\DeclareMathAlphabet{\mathpzc}{OT1}{pzc}{m}{it}

\newcommand{\DD}{\mathbb{D}}
\newcommand{\FF}{\mathbb{F}}
\newcommand{\ZZ}{\mathbb{Z}}
\newcommand{\QQ}{\mathbb{Q}}

\newcommand{\TT}{\mathbb{T}}
\newcommand{\GG}{\mathbb{G}}

\newcommand{\CC}{\mathbb{C}}

\newcommand{\NN}{\mathbb{N}}

\newcommand{\bff}{\mathbf{f}}
\newcommand{\bg}{\mathbf{g}}

\newcommand{\bu}{\mathbf{u}}
\newcommand{\bv}{\mathbf{v}}

\newcommand{\bF}{\mathbf{F}}
\newcommand{\bG}{\mathbf{G}}

\DeclareMathOperator{\Mat}{Mat}

\DeclareMathOperator{\Li}{Li}

\newcommand{\ok}{\overline{k}}

\newcommand{\power}[2]{{#1 [\![ #2 ]\!]}}
\newcommand{\laurent}[2]{{#1 (\!( #2 )\!)}}

\begin{document}

\title[Linear independence of monomials of multizeta values]{Linear independence of monomials of multizeta values in positive characteristic}

%    Information for first author
\author{Chieh-Yu Chang}
%    Address of record for the research reported here
\address{Department of Mathematics, National Tsing Hua University and National Center for Theoretical Sciences, Hsinchu City 30042, Taiwan
  R.O.C.}

\email{cychang@math.nthu.edu.tw}

%    \thanks will become a 1st page footnote.
\thanks{ The author was partially supported by Golden-Jade fellowship of Kenda Foundation, NCTS and NSC
Grant 100-2115-M-007-010-MY3}
\thanks{}

%    General info
\subjclass[2000]{Primary 11J91, 11J93}

\date{March 13, 2014}
\begin{abstract}
 In this paper, we study transcendence theory for Thakur multizeta values in positive characteristic. We prove an analogue of the strong form of Goncharov's conjecture. The same result is also established for Carlitz multiple polylogarithms at algebraic points.
\end{abstract}

\keywords{Multizeta values; Transcendence; Linear independence; Carlitz multiple polylogarithms}

\maketitle
\section{Introduction}
\subsection{Classical multiple zeta values}
Multiple zeta values (abbreviated as MZVs) are real numbers defined by Euler:
\[ \zeta(s_{1},\ldots,s_{r}):=\sum_{n_{1}>\cdots>n_{r}\geq 1}\frac{1}{n_{1}^{s_{1}}\cdots n_{r}^{s_{r}}  },\]
where $s_{1},\ldots,s_{r}$ are positive integers with $s_{1}\geq 2$. Here $r$ is called the depth and $\sum_{i=1}^{r}s_{i}$ is called the weight of the MZV $\zeta(s_{1},\ldots,s_{r})$. These values are generalizations of the Riemann zeta function at positive integers, and have been much studied in recent years because of various points of view of their interesting properties. For example, they occur as periods of the mixed Tate motives, and they occur as values of Feynman integrals in quantum field theory. We refer the reader to the papers on this subject by Brown, Deligne, Drinfeld, Goncharov, Hoffman, Kaneko, Terasoma,  Zagier etc. See also the recent advances by Brown~\cite{B12} and Zagier~\cite{Z12}.

 It is natural to ask the transcendence nature of these MZVs. However, it is still an open problem although one knows the transcendence of the Riemann zeta function at even positive integers because of Euler's formula. Let $\mathfrak{Z}$ be the $\QQ$-algebra generated by all MZVs and for $w\geq2$ let $\mathfrak{Z}_{w}$ be the $\QQ$-vector space spanned by the weight $w$ MZVs. It is well known that $\mathfrak{Z}_{w_{1}}\mathfrak{Z}_{w_{2}}\subset \mathfrak{Z}_{w_{1}+w_{2}}$ for $w_{1}\geq2$, $w_{2}\geq2$. The main motivation of the study in this paper is from the important  conjecture given by Goncharov \cite{G97}: $\mathfrak{Z}$ forms a graded algebra (graded by weights), i.e.,  $\mathfrak{Z}=\QQ \oplus_{w\geq 2} \mathfrak{Z}_{w}$. The following conjecture (folklore) is a stronger form of Goncharov's conjecture.
\begin{conjecture}\label{Conj:introd}
Let $\overline{\mathfrak{Z}}$ be the $\overline{\QQ}$-algebra generated by MZVs, and let $\overline{\mathfrak{Z}}_{w}$ be the $\overline{\QQ}$-vector space spanned by the weight $w$ MZVs for $w\geq 2$. Then one has that
\begin{enumerate}
\item $\overline{\mathfrak{Z}}$ forms a graded algebra, i.e.,
$\overline{\mathfrak{Z}}=\overline{\QQ}\oplus_{w\geq 2}\overline{\mathfrak{Z}}_{w}$.
\item $\overline{\mathfrak{Z}}$ is defined over $\QQ$ in the sense that the canonical map $\overline{\QQ}\otimes_{\QQ}\mathfrak{Z}\rightarrow \overline{\mathfrak{Z}}$ is bijective.
\end{enumerate}
\end{conjecture}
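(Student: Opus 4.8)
The plan is to lift the entire problem to the Tannakian category $\mathrm{MT}(\ZZ)$ of mixed Tate motives over $\ZZ$ and to work with \emph{motivic} multiple zeta values, transferring the structural statements back to $\overline{\mathfrak{Z}}$ through the period homomorphism. Concretely, one attaches to the motivic fundamental groupoid of $\PP^1 \setminus \{0,1,\infty\}$ a graded Hopf algebra $\mathcal{H} = \bigoplus_{w\geq 0} \mathcal{H}_w$ of motivic periods, containing for each admissible index $(s_1,\ldots,s_r)$ a canonical motivic lift $\zeta^{\mathfrak{m}}(s_1,\ldots,s_r)$ that is homogeneous of degree $w = \sum_i s_i$. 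By construction $\mathcal{H}$ is graded by weight, and there is a period homomorphism $\mathrm{per}\colon \mathcal{H} \to \RR$ with $\mathrm{per}(\zeta^{\mathfrak{m}}(s_1,\ldots,s_r)) = \zeta(s_1,\ldots,s_r)$. The whole strategy is to prove that $\mathrm{per}$ is injective on the subalgebra generated by the $\zeta^{\mathfrak{m}}$, since this forces both the grading and the $\QQ$-structure to descend to $\overline{\mathfrak{Z}}$.

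Second, I would import Brown's theorem that the motivic MZVs are spanned by the \emph{Hoffman elements} $\zeta^{\mathfrak{m}}(s_1,\ldots,s_r)$ with every $s_i \in \{2,3\}$, and that these are $\QQ$-linearly independent; equivalently $\dim_{\QQ} \mathcal{H}_w = d_w$, where $d_0 = 1$, $d_1 = 0$, $d_2 = 1$ and $d_w = d_{w-2} + d_{w-3}$. This is where the full motivic Galois action, via the infinitesimal coaction of the de Rham Hopf algebra and its associated filtration, is used to separate weights and depths on the motivic side. At this stage the \emph{motivic} analogue of both parts of the conjecture already holds: $\mathcal{H}$ is a graded algebra, and it is obtained from its $\QQ$-form by extension of scalars to $\overline{\QQ}$.

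Third comes the descent to genuine complex numbers. Part (1) — that $\overline{\mathfrak{Z}} = \overline{\QQ} \oplus_{w\geq 2} \overline{\mathfrak{Z}}_w$ is a direct sum — is the assertion that no nontrivial $\overline{\QQ}$-linear relation mixes MZVs of distinct weights; after clearing algebraic coefficients and applying a Galois-conjugation argument, this reduces to the statement that the weight grading on $\mathcal{H}$ is not collapsed by $\mathrm{per}$. Part (2) — bijectivity of $\overline{\QQ} \otimes_{\QQ} \mathfrak{Z} \to \overline{\mathfrak{Z}}$ — is exactly the complementary injectivity statement, namely that every $\overline{\QQ}$-linear relation among the $\zeta(s_1,\ldots,s_r)$ is already defined over $\QQ$. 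Both assertions descend from the single hypothesis that $\mathrm{per}$, restricted to the motivic MZV algebra, is injective, i.e.\ that every $\overline{\QQ}$-linear relation among the actual values is the image of a relation among the $\zeta^{\mathfrak{m}}(s_1,\ldots,s_r)$.

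The hard part — in fact the only genuinely missing input — is this last injectivity, which is precisely the mixed-Tate case of Grothendieck's period conjecture. The motivic machinery supplies the correct \emph{upper} bound $\dim_{\overline{\QQ}} \overline{\mathfrak{Z}}_w \leq d_w$ together with the full expected symmetry, but to obtain the matching lower bound one must know that the transcendental period map introduces no unexpected algebraic relations, and no unconditional transcendence method currently reaches this. By contrast, the positive-characteristic analogue treated in the body of the paper is accessible exactly because the corresponding period conjecture is a \emph{theorem} in that setting: Papanikolas's Tannakian theory of $t$-motives, together with the Anderson–Brownawell–Papanikolas linear independence criterion, plays the role of the unavailable Grothendieck conjecture, and it is this substitution that makes the analogous statement provable where the classical Conjecture \ref{Conj:introd} remains open.
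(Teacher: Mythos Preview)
The statement you are addressing is labelled a \emph{Conjecture} in the paper, and the paper does not prove it; it is quoted only as motivation, and the body of the paper establishes the positive-characteristic analogue (Theorem~\ref{T:Main Thm}) instead. So there is no ``paper's own proof'' to compare your proposal against.

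Your write-up is not a proof either, and you say so yourself. You correctly reduce both parts of the conjecture to the single assertion that the period map $\mathrm{per}$ is injective on the $\QQ$-algebra of motivic MZVs, and you correctly note that this is the mixed-Tate instance of Grothendieck's period conjecture, which is open. Everything up to that point --- the Hopf algebra $\mathcal{H}$, Brown's Hoffman basis, the dimension recursion $d_w=d_{w-2}+d_{w-3}$ --- gives only the \emph{upper} bound $\dim_{\QQ}\mathfrak{Z}_w\le d_w$ (Deligne--Goncharov, Terasoma) and the graded structure on the motivic side; none of it touches the transcendence statement that distinct-weight real MZVs are $\overline{\QQ}$-linearly independent. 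So the ``genuine gap'' is exactly the one you already identify: the injectivity of $\mathrm{per}$ is assumed, not proved, and no known method supplies it. Your closing paragraph, contrasting this with the function-field situation where the ABP criterion and Papanikolas's theory replace the unavailable period conjecture, is an accurate summary of why the paper can prove Theorem~\ref{T:Main Thm} while Conjecture~\ref{Conj:introd} remains out of reach.
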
In other words, to prove their linear independence over $\Bbb Q$, one could adopt a strategy of  proving linear independence over $\overline{\Bbb Q}$ for these special values although it is still wild open. The primary purpose of this article is to  prove an analogue of the conjecture above in the setting of multizeta values in positive characteristic.

\subsection{Thakur multizeta values}
In analogy with the classical MZVs, in his seminal work \cite{Thakur04} Thakur studied the characteristic $p$ multizeta values (abbreviated as MZVs) in $k_{\infty}^{\times}$, where $k$ is the rational function field $\FF_{q}(\theta)$ over a finite field $\FF_{q}$ and $k_{\infty}$ is the completion of $k$ at $\infty$, which is the zero divisor of $1/\theta$. Let $A_{+}$ be the monic polynomials in $A$. For any r-tuple $\mathfrak{s}=(s_{1},\ldots,s_{r})\in \NN^{r}$, the multizeta value at $\mathfrak{s}$ is defined by the series
\[\zeta_{A}(\mathfrak{s}):=\sum\frac{1}{a_{1}^{s_{1}}\cdots a_{r}^{s_{r}}}\in k_{\infty}  ,\]
where the sum is over $(a_{1},\ldots,a_{r})\in A_{+}^{r}$ with $\deg_{\theta} a_{i}$ strictly decreasing. Note that $\zeta_{A}(\mathfrak{s})$ is non-vanishing by the work of Thakur~\cite{Thakur09a}.

These MZVs are generalizations of the Carlitz zeta values at positive integers \cite{Carlitz}, and they occur as periods of mixed Carlitz-Tate motives (explicitly constructed) by the  work of Anderson-Thakur \cite{AT09}.  Notice that the weight one MZV is just the Carlitz zeta value at $1$, which exists in this non-archimedean field setting. Note further that Thakur \cite{Thakur10} showed that a product of two multizeta values of weight $w_{1}$ and $w_{2}$ can be expressed as an  $\FF_{p}$-linear combination of MZVs of weight $w_{1}+w_{2}$ (see \cite{LR11} for the explicit expressions).

 Our main result in this paper is to prove a precise function field analogue of Conjecture~\ref{Conj:introd} (stated as Theorem~\ref{T:Main Thm}). That is,  the $\bar{k}$-algebra generated by all MZVs forms a graded algebra (graded by weights) and it is defined over $k$. As consequences, one has:
\begin{enumerate}
\item[$\bullet$] Each nontrivial monomial of MZVs is transcendental over $k$.

\item[$\bullet$] The ratio of two different weight nontrivial monomials of MZVs is transcendental over $k$.
\end{enumerate}
The results above generalize the work of Yu \cite{Yu91, Yu97} for the depth one case, and the work of Thakur \cite{Thakur09b} on the transcendence of some specific MZVs.  We further derive the following consequences stated as Theorem~\ref{T:EulerDichMZV} and Corollary~\ref{Cor:EulerDich}:
\begin{enumerate}
\item[$\bullet$] Let $Z_{1}$ and $Z_{2}$ be  two MZVs of the same weight. Then either $Z_{1}/Z_{2}\in k$ or $Z_{1}$ and $Z_{2}$ are algebraically independent over $k$.
\item[$\bullet$] Let $Z$ be a MZV of weight $w$. Then either $Z/\tilde{\pi}^{w}$ is in $k$ or $Z$ is algebraically independent from $\tilde{\pi}$.
\end{enumerate}Here $\tilde{\pi}$ is a fundamental period of the Carlitz module, which plays the analogous role of $2\pi \sqrt{-1}$ for the multiplicative group $\GG_{m}$. The last property listed above is called {\it{Euler dichotomy}} phenomenon (see \S~\ref{sec:EulerDichMZV}). In particular, every multizeta value of \lq\lq odd \rq\rq weight $w$ (i.e., $(q-1)\nmid w$) is algebraically independent from $\tilde{\pi}$.

The main goal of transcendence theory for MZVs is to determine all the $\bar{k}$-algebraic relations among the MZVs. However, in contrast to the classical case, a nice description of the full set of identities satisfied by MZVs is not known yet (see \cite{AT09,Thakur10}). As all $\bar{k}$-algebraic relations among the MZVs are $\bar{k}$-linear relations among the monomials of MZVs, Theorem~\ref{T:Main Thm} has shown that all $\bar{k}$-algebraic relations are coming from the $k$-linear relations among the same weight monomials of MZVs. However, there still remains the key problem of  finding all the $k$-linear relations among the same weight monomials of MZVs. Note that the base field $k$ plays the analogue of $\QQ$, but unlike the classical case the prime field in our setting is $\FF_{p}$, the fixed field of the Frobenius $p$-th power operator. However, more $\FF_{p}$-linear relations among MZVs are understood in \cite{Thakur10}.

\subsection{Multiple polylogarithms}
Classical multiple polylogarithms with several variables are generalizations of polylogarithms and their specializations at $(1,\ldots,1)$ give the MZVs. This phenomenon becomes delicate in the function field setting. In \cite{AT90}, Anderson-Thakur  established that Carlitz zeta value at $n\in \NN$ (ie., the multizeta value of weight $n$ and depth one) can be expressed as a $k$-linear combination of the $n$-th Carlitz polylogarithm at integral points.

In this article, we define the Carlitz multiple polylogarithms (abbreviated as CMPLs), and extend the work of Anderson-Thakur to multizeta values. Precisely, using some results of \cite{AT90, AT09} we show that each MZV is expressed explicitly as a $k$-linear combination of CMPLs at integral points (cf.~Theorem~\ref{T:CMPLsMZVs}).

From the definition, one sees that these CMPLs satisfy the stuffle relations (cf.~\S\S~\ref{sub:stuffle}). Since MZVs are $k$-linear combinations of CMPLs at integral points, to prove the analogue of Conjecture~\ref{Conj:introd} we prove that the $\bar{k}$-algebra generated by CMPLs at algebraic points forms a graded algebra and it is defined over $k$. This result is addressed as Theorem~\ref{T:Main Thm2}, which implies Theorem~\ref{T:Main Thm}.  As consequences, one further has:
\begin{enumerate}
\item[$\bullet$] Each nontrivial monomial of CMPLs at algebraic points is transcendental over $k$.

\item[$\bullet$] The ratio of two different weight nontrivial monomials of CMPLs at algebraic points is transcendental over $k$.

\item[$\bullet$] Let $Z_{1}$ and $Z_{2}$ be two nonzero values which are CMPLs at algebraic points. If $Z_{1}$ and $Z_{2}$ are of the same weight, then either $Z_{1}/Z_{2}\in k$ or $Z_{1}$ and $Z_{2}$ are algebraically independent over $k$.
\end{enumerate}

 Note that as Theorem~\ref{T:Main Thm2} implies that all the $\bar{k}$-polynomial relations among the CMPLs at algebraic points are homogenous over $k$, it is natural to ask how to describe the $k$-linear relations among the same weight monomials and we wish to tackle this problem in the future. Figuring out the problem above  would be helpful to understand the relations among MZVs.

We note that Theorem~\ref{T:Main Thm2} reveals an interesting phenomenon, which occurred previously in the celebrated theorem of Baker asserting that $\mathbb{Q}$-linear independence of logarithms of algebraic numbers implies the $\overline{\mathbb{Q}}$-linear independence. This important theorem has been generalized to the contexts of abelian logarithms by W\"ustholz (cf. \cite{Wus89a, Wus89b, BW07}) and also a function field analogue of W\"ustholz theory is developed by Yu \cite{Yu97}.

\subsection{Outline and some remarks} In \S~2, we fix our notation and state our result on multizeta values. Based on the work of \cite{AT09}, one is able to create  Frobenius difference equations for which the specialization of the solution functions gives the desired MZV. We observe that the case of nonzero values as CMPLs at algebraic points shares the same property as above. Hence we shall say that such values have the {\it{MZ}} property (see Definition~\ref{Def:DefMZ}).

In \S~3, we state a general linear independence result for the nonzero values having the {\it{MZ}} property, which is stated as Theorem~\ref{T:LinIndepPsitheta}. We give a proof of Theorem~\ref{T:LinIndepPsitheta} in \S~4, and then show in \S~5 that the nonzero values as CMPLs at algebraic points have the {\it{MZ}} property, and hence appeal to Theorem~\ref{T:LinIndepPsitheta} showing Theorem~\ref{T:Main Thm2}.

We mention that the tools of proving algebraic independence using t-motives introduced by Anderson \cite{A86} come from Papanikolas \cite{P08}, which can be regarded as a function field analogue of Grothendieck's periods conjecture. Using these tools, one has the algebraic independence results on Carlitz zeta values \cite{CY07}, Drinfeld logarithms at algebraic points \cite{CP12} etc. Although one is able to construct suitable $t$-motives so that the given multizeta values or nonzero values as CMPLs at algebraic points occur as periods of the t-motives (cf.~\cite{AT09}), to obtain the more comprehensive algebraic independence results on multizeta values or CMPLs at algebraic points via Papanikolas' theory one has to compute  the dimension of the relevant $t$-motivic Galois group. However, the dimension of such $t$-motivic Galois group relies on  the information of the periods of the $t$-motive, which is closely related to the description of the rich identities that multizeta values or CMPLs at algebraic points  satisfy. Hence it would be  difficult to compute the dimension of the Galois group in question at this moment.

The overall strategy of showing Theorem~\ref{T:LinIndepPsitheta} is to use the criterion established by Anderson-Brownawell-Papanikolas \cite{ABP04} (abbreviated as ABP-criterion). We apply the ABP-criterion to lift the given $\bar{k}$-linear relations among the special values in question to the $\bar{k}[t]$-linear relations among the solution functions. Then we analyze the coefficients (functions) as well as the solution functions to show the desired result. Finally, we emphasize that in this setting using the ABP-criterion opens a door towards the general linear independence results in question, and it enables one to avoid some difficulties occurring in the computation of the relevant Galois groups via Papanikolas' theory.

\subsection*{Acknowledgements} I am grateful to M.~Papanikolas, D.~Thakur and J.~Yu for their careful reading and many helpful comments, and to P.~Deligne for pointing out an incorrect description in an earlier version of this paper. I further thank F.~Brown, D.~Brownawell, Y.~Taguchi and J.~Zhao for many helpful conversations. Most results of this paper were worked out when I visited Hong Kong University of Science and Technology and IHES. I thank them for their hospitality and M.-S.~Xiong for his kind invitation to visit HUST. Finally, I am grateful to the referee for providing many helpful comments, which greatly improve this paper. This article is dedicated to the memory of my father.

\section{Main Result for multizeta values}
\subsection{Notation}

In this paper, we adopt the following notation.
\begin{longtable}{p{0.5truein}@{\hspace{5pt}$=$\hspace{5pt}}p{5truein}}
$\FF_q$ & the finite field with $q$ elements, for $q$ a power of a
prime number $p$. \\
$\theta$, $t$ & independent variables. \\
$A$ & $\FF_q[\theta]$, the polynomial ring in the variable $\theta$ over $\FF_q$.
\\
$A_{+}$ & set of monic polynomials in A.
\\
$k$ & $\FF_q(\theta)$, the fraction field of $A$.\\
$k_\infty$ & $\laurent{\FF_q}{1/\theta}$, the completion of $k$ with
respect to the place at infinity.\\
$\overline{k_\infty}$ & a fixed algebraic closure of $k_\infty$.\\
$\ok$ & the algebraic closure of $k$ in $\overline{k_\infty}$.\\
$\CC_\infty$ & the completion of $\overline{k_\infty}$ with respect to
the canonical extension of $\infty$.\\
$|\cdot|_{\infty}$& a fixed absolute value for the completed field $\CC_{\infty}$ so that $|\theta|_{\infty}=q$.\\
$\deg$& function assigning to $x\in k_{\infty}$ its degree in $\theta$.\\

 $\power{\CC_\infty}{t}$ & ring of formal power series in $t$ over $\CC_{\infty}$.\\

 $\TT$ & ring of power series in $\power{\CC_\infty}{t}$ that are convergent on the closed unit disc, the Tate algebra over $\CC_{\infty}$.\\
$\NN$ & set of positive integers.\\
\end{longtable}

\subsection{Multizeta values}\label{sub:MZV} Given any $s\in \NN$ and nonnegative integer $d$, we define the power sum:
\[ S_{d}(s):=\sum_{{\tiny{
                     \begin{array}{c}
                       a\in A_{+} \\
                       \deg a=d \\
                     \end{array}
}}} \frac{1}{a^{s}}. \]
In analogy with the classical multiple zeta values, Thakur \cite{Thakur04} studied the following multizeta values (which we abbreviate as MZVs): for any $r$-tuple $(s_{1},\ldots,s_{r})\in \NN^{r}$,
\[ \zeta_{A}(s_{1},\ldots,s_{r}):=\sum_{d_{1}>\cdots>d_{r}\geq 0}S_{d_{1}}(s_{1})\cdots S_{d_{r}}(s_{r})=\sum\frac{1}{a_{1}^{s_{1}}\cdots a_{r}^{s_{r}}}\in k_{\infty}  ,\]where the second sum is over $(a_{1},\ldots,a_{r})\in A_{+}^{r}$ with $\deg a_{i}$ strictly decreasing. We call this MZV having {\it{depth}} $r$ and {\it{weight}} $\sum_{i=1}^{r}s_{i}$. In the case of $r=1$, the values above are the Carlitz zeta values at positive integers. Note that each MZV is nonzero by the work of Thakur \cite{Thakur09a}. Note further that there is no natural
order on polynomials in contrast to integers, so unlike the
classical case, it is not immediately
clear that the span of MZVs is an algebra, but this together with period
interpretation was conjectured and then proved in \cite{Thakur09b, AT09, Thakur10}.

Let $Z_{1},\ldots,Z_{n}$ be MZVs of weights $w_{1},\ldots,w_{n}$ respectively. For nonnegative integers $m_{1},\ldots,m_{n}$, not all zero,  we define the (total) weight of the monomial $Z_{1}^{m_{1}}\ldots Z_{n}^{m_{n}}$ to be
\[ \sum_{i=1}^{n}m_{i} w_{i}  .\] Let $\overline{\mathcal{Z}}_{w}$ (resp. $\mathcal{Z}_{w}$) be the $\bar{k}$-vector space (resp. $k$-vector space) spanned by weight $w$ MZVs, and let $\overline{\mathcal{Z}}$ (resp. $\mathcal{Z}$) be the $\bar{k}$-algebra (resp. $k$-algebra) generated by all MZVs.  Note that by \cite{Thakur09b} we have $\mathcal{Z}_{w} \mathcal{Z}_{w'}\subseteq \mathcal{Z}_{w+w'}$. The following result is an analogue of Conjecture~\ref{Conj:introd}, and its proof is given in \S~\ref{sub:ProofMainThm}.

\begin{theorem}\label{T:Main Thm}
Let $w_{1},\ldots,w_{\ell}$ be $\ell$ distinct positive integers. Let $V_{i}$ be a finite set consisting of some monomials of multizeta values of total weight $w_{i}$ for $i=1,\ldots,\ell$. If $V_{i}$ is a linearly independent set over $k$, then the set
\[ \left\{ 1\right\}\bigcup_{i=1}^{\ell} V_{i} \] is linearly independent over $\bar{k}$. In particular, we have
\begin{enumerate}
\item $\overline{\mathcal{Z}}$ forms a graded algebra, i.e., $\overline{\mathcal{Z}}=\bar{k}\oplus_{w\in \NN}\overline{\mathcal{Z}}_{w}$.

\item $\overline{\mathcal{Z}}$ is defined over $k$ in the sense that the canonical map $\ok\otimes_{k}\mathcal{Z}\rightarrow \overline{\mathcal{Z}}$
is bijective.
\end{enumerate}
\end{theorem}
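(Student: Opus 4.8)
The plan is to deduce Theorem~\ref{T:Main Thm} from the analogous statement for Carlitz multiple polylogarithms at integral points (Theorem~\ref{T:Main Thm2}), via the expression of each MZV as a $k$-linear combination of CMPLs at integral points (Theorem~\ref{T:CMPLsMZVs}) together with the stuffle relations. First I would record the structural bookkeeping. By Thakur's product formula $\mathcal{Z}_{w}\mathcal{Z}_{w'}\subseteq\mathcal{Z}_{w+w'}$, so as $k$-vector spaces $\mathcal{Z}=k+\sum_{w\ge1}\mathcal{Z}_{w}$ and $\overline{\mathcal{Z}}=\bar k+\sum_{w\ge1}\overline{\mathcal{Z}}_{w}$; in particular every weight-$w$ monomial of MZVs already lies in $\mathcal{Z}_{w}$. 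Dually, the stuffle relations for CMPLs show every weight-$w$ monomial of CMPLs at algebraic points is a $k$-linear combination of single weight-$w$ CMPLs at algebraic points. Hence the whole theorem is equivalent to its ``main part'': for distinct $w_{1},\dots,w_{\ell}$ and $V_{i}$ a $k$-linearly independent set of weight-$w_{i}$ monomials of MZVs, the set $\{1\}\cup\bigcup_{i}V_{i}$ is $\bar k$-linearly independent.

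For the main part I would argue as follows. By Theorem~\ref{T:CMPLsMZVs} and the stuffle relations, $\mathrm{span}_{k}(V_{i})$ is contained in the $k$-vector space $\mathcal{L}_{w_{i}}$ spanned by the weight-$w_{i}$ CMPLs at integral points; collecting the finitely many CMPLs occurring in the expansions of the members of $V_{i}$ and extracting a basis of their span, I get a finite $k$-linearly independent set $W_{i}$ of weight-$w_{i}$ CMPLs at integral points with $\mathrm{span}_{k}(V_{i})\subseteq\mathrm{span}_{k}(W_{i})$. Theorem~\ref{T:Main Thm2} applied to the $W_{i}$ gives that $\{1\}\cup\bigcup_{i}W_{i}$ is $\bar k$-linearly independent. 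Now given a relation $c_{0}\cdot1+\sum_{i}\sum_{v\in V_{i}}c_{i,v}\,v=0$ with coefficients in $\bar k$, I write $v=\sum_{w\in W_{i}}\lambda_{v,w}\,w$ with $\lambda_{v,w}\in k$ and substitute; $\bar k$-linear independence of $\{1\}\cup\bigcup_{i}W_{i}$ forces $c_{0}=0$ and $\sum_{v\in V_{i}}c_{i,v}\lambda_{v,w}=0$ for all $i$ and all $w\in W_{i}$. For fixed $i$ the coordinate vectors $(\lambda_{v,w})_{w\in W_{i}}\in k^{W_{i}}$, $v\in V_{i}$, are $k$-linearly independent (since $V_{i}$ and $W_{i}$ are), hence $\bar k$-linearly independent in $\bar k^{W_{i}}$, so all $c_{i,v}=0$.

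From the main part, (1) and (2) are formal. For (1), $\overline{\mathcal{Z}}=\bar k+\sum_{w}\overline{\mathcal{Z}}_{w}$; fixing for each $w$ a $k$-basis $\mathcal{B}_{w}$ of $\mathcal{Z}_{w}$ consisting of weight-$w$ MZVs and expanding any putative relation $c_{0}+\sum_{i}z_{i}=0$ with $z_{i}\in\overline{\mathcal{Z}}_{w_{i}}$ in the $\mathcal{B}_{w_{i}}$, nontriviality would contradict the $\bar k$-linear independence of $\{1\}\cup\bigcup_{i}\mathcal{B}_{w_{i}}$ given by the main part; thus the sum is direct and meets $\bar k$ trivially. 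The same argument over $k$ shows $\mathcal{Z}=k\oplus\bigoplus_{w}\mathcal{Z}_{w}$, so $\{1\}\cup\bigcup_{w}\mathcal{B}_{w}$ is a $k$-basis of $\mathcal{Z}$ which, by the main part, remains $\ok$-linearly independent in $\overline{\mathcal{Z}}$; this is exactly injectivity of $\ok\otimes_{k}\mathcal{Z}\to\overline{\mathcal{Z}}$, and surjectivity is immediate since $\overline{\mathcal{Z}}$ is generated over $\ok$ by MZVs, proving (2).

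I expect the deduction above to be entirely routine linear algebra over $k\subseteq\bar k$ once Theorem~\ref{T:CMPLsMZVs} and the stuffle relations are available; the genuine difficulty lies upstream in Theorem~\ref{T:Main Thm2} and, through it, in Theorem~\ref{T:LinIndepPsitheta}. There one attaches to each relevant value a Frobenius difference equation whose solution function specializes to it (the MZ property of Definition~\ref{Def:DefMZ}), applies the ABP-criterion to lift a hypothetical $\bar k$-linear relation among the values to a $\bar k[t]$-linear relation among the solution functions, and then analyzes the $t$-dependence of the coefficient functions, using the distinctness of the weights $w_{i}$ to separate blocks, to conclude the relation is forced to be defined over $k$ and weight-homogeneous. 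That weight-separation analysis of the lifted relation is, I expect, the crux of the whole argument.
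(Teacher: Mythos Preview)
Your proposal is correct and follows essentially the same route as the paper. The paper's own proof of Theorem~\ref{T:Main Thm} is two sentences: by Theorem~\ref{T:CMPLsMZVs} one has $\mathcal{Z}_{w}\subset\mathfrak{M}_{w}$ for every $w$, and hence Theorem~\ref{T:Main Thm} follows from Theorem~\ref{T:Main Thm2}. Your write-up simply unpacks the linear algebra behind that second sentence (extracting a $k$-basis $W_{i}$ of CMPLs, applying Theorem~\ref{T:Main Thm2} to the $W_{i}$, and reading off the conclusion for the $V_{i}$), and your closing paragraph correctly locates the real content upstream in Theorem~\ref{T:LinIndepPsitheta} and the ABP-criterion analysis.
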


\begin{corollary}
Each nontrivial monomial of multizeta values is transcendental over $k$.
\end{corollary}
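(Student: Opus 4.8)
The plan is to deduce the corollary directly from Theorem~\ref{T:Main Thm} by the standard power-trick. Let $M=Z_{1}^{m_{1}}\cdots Z_{n}^{m_{n}}$ be a nontrivial monomial of multizeta values, where $Z_{i}$ has weight $w_{i}$, so that $M$ has total weight $w:=\sum_{i=1}^{n}m_{i}w_{i}\geq 1$. Since each $Z_{i}$ is nonzero by Thakur's nonvanishing theorem \cite{Thakur09a} and $k_{\infty}$ is a field, $M\neq 0$; more precisely, for every $j\geq 1$ the power $M^{j}=Z_{1}^{jm_{1}}\cdots Z_{n}^{jm_{n}}$ is again a nonzero monomial of multizeta values, now of total weight $jw$.

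Next I would suppose, for contradiction, that $M$ is algebraic over $k$. Taking (a suitable scalar multiple of) its minimal polynomial, one gets a nontrivial relation
\[ c_{0}+c_{1}M+\cdots+c_{d}M^{d}=0,\qquad c_{j}\in k,\ c_{d}\neq 0,\ d\geq 1. \]
Now apply Theorem~\ref{T:Main Thm} with $\ell=d$, the pairwise distinct weights $w,2w,\ldots,dw$, and the singleton sets $V_{j}:=\{M^{j}\}$ for $j=1,\ldots,d$: each $V_{j}$ consists of a single monomial of multizeta values of total weight $jw$, and is linearly independent over $k$ precisely because $M^{j}\neq 0$. The theorem then yields that $\{1\}\cup\bigcup_{j=1}^{d}V_{j}=\{1,M,M^{2},\ldots,M^{d}\}$ is linearly independent over $\bar{k}$, hence over $k$, which contradicts the displayed relation. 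Therefore $M$ is transcendental over $k$.

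There is essentially no obstacle here beyond bookkeeping already packaged into Theorem~\ref{T:Main Thm}: the only inputs are that powers of a monomial of multizeta values are again monomials of multizeta values of the expected total weight, and that such powers are nonzero, the latter being Thakur's nonvanishing result. I note that the same reasoning applies more generally: clearing denominators in a polynomial relation satisfied by a quotient $M_{1}/M_{2}$ of two nontrivial monomials of \emph{different} total weights $w_{1}\neq w_{2}$ produces a nontrivial $k$-linear relation among the monomials $M_{1}^{j}M_{2}^{d-j}$, whose total weights $dw_{2}+j(w_{1}-w_{2})$ are again pairwise distinct, so Theorem~\ref{T:Main Thm} rules it out and such a ratio is transcendental over $k$ as well.
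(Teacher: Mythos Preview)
Your argument is correct, but it is more elaborate than necessary. Since $M\in k_{\infty}\subset\overline{k_{\infty}}$ and $\bar{k}$ is the algebraic closure of $k$ inside $\overline{k_{\infty}}$, saying that $M$ is transcendental over $k$ is exactly the statement $M\notin\bar{k}$, i.e.\ that $\{1,M\}$ is linearly independent over $\bar{k}$. Thus you may invoke Theorem~\ref{T:Main Thm} with $\ell=1$ and the singleton $V_{1}=\{M\}$ (which is $k$-linearly independent because $M\neq 0$) to conclude immediately that $\{1,M\}$ is $\bar{k}$-linearly independent; no minimal polynomial or higher powers $M^{j}$ are needed. This is the intended one-line deduction in the paper, which leaves the corollary unproved precisely because it is immediate from the theorem. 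Your longer detour via $\{1,M,\ldots,M^{d}\}$ is valid but redundant, and the closing remark about ratios of different-weight monomials (the next corollary) can likewise be shortened to the $\ell=2$ case $V_{1}=\{M_{1}\}$, $V_{2}=\{M_{2}\}$.
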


\begin{corollary}
The ratio of two different weight nontrivial monomials of multizeta values is transcendental over $k$.
\end{corollary}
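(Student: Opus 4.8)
The plan is to reduce Theorem~\ref{T:Main Thm} to a linear independence statement about the solution functions attached to MZVs and then invoke the ABP-criterion. First I would recall the period interpretation of MZVs from Anderson--Thakur \cite{AT09}: each multizeta value $\zeta_A(s_1,\ldots,s_r)$ arises, up to an explicit factor, as the specialization at $t=\theta$ of an entry of a vector $\Psi$ satisfying a Frobenius difference equation $\Psi^{(-1)}=\Phi\Psi$, where $\Phi$ is a block-upper-triangular matrix over $\bar{k}[t]$ built from the tuples $(s_1,\ldots,s_r)$. More generally, I would package the relevant structure into the \emph{MZ} property (Definition~\ref{Def:DefMZ}): a nonzero value $Z$ has this property if it occurs as such a specialization with $\Phi$ of a prescribed shape and with the appropriate rigid-analytic trivialization defined over $\bar{k}$. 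The key point is that products and monomials of MZVs of a fixed total weight $w$ again occur this way: one takes tensor products / direct sums of the associated difference equations, so that a monomial $Z_1^{m_1}\cdots Z_n^{m_n}$ of total weight $w$ also has the MZ property with weight $w$.

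Next I would organize the values by weight. Group the monomials in $\bigcup_i V_i$ by their total weight $w_i$; within weight $w_i$ the set $V_i$ is $k$-linearly independent by hypothesis. Suppose for contradiction there is a nontrivial $\bar{k}$-linear relation
\[
a_0\cdot 1+\sum_{i=1}^{\ell}\sum_{v\in V_i}a_{i,v}\, v=0,\qquad a_0,a_{i,v}\in\bar{k}.
\]
Build the big $\Phi$ as the block-diagonal matrix whose blocks realize, for each weight $w_i$, all the monomials in $V_i$ simultaneously (together with the trivial block realizing $1$), and let $\Psi$ be the corresponding rigid-analytic trivialization, so that evaluation at $t=\theta$ recovers the listed values up to nonzero algebraic scalars coming from the Anderson--Thakur normalization. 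Feeding the assumed relation into the ABP-criterion \cite{ABP04} then produces a \emph{functional} relation: a vector $f\in\bar{k}[t]^{N}$ with $f^{(-1)}=\Phi^{\tr}f$ (or the appropriate formulation) whose specialization at $t=\theta$ encodes the $a_{i,v}$. This is the content of Theorem~\ref{T:LinIndepPsitheta}, whose proof I would carry out in \S~4.

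The heart of the argument is the analysis of these polynomial coefficient functions. Because $\Phi$ is block-diagonal by weight, the difference equation $f^{(-1)}=\Phi^{\tr}f$ decouples across weights, and a degree/growth estimate (using $|\theta|_\infty=q$ and the fact that the $(-1)$-twist multiplies degrees by $1/q$) forces the polynomial solutions within distinct weight-blocks to behave independently; in particular the weight-$w_i$ part of the relation must itself vanish after specialization. Within a single weight $w_i$, the structure of the $\Phi$-block — essentially a tensor/symmetric power of the depth-one blocks, which are the ones governing Carlitz polylogarithms and for which the relevant functions are well understood from \cite{AT90, Pa08} — lets one extract from the functional relation a $k$-linear (not merely $\bar{k}$-linear) relation among the monomials in $V_i$. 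Since $V_i$ was assumed $k$-linearly independent, every $a_{i,v}=0$, and then $a_0=0$ as well, the contradiction we wanted. Finally, parts (1) and (2) follow formally: (1) is the special case where one takes $V_i$ to be a $k$-basis of $\mathcal{Z}_{w_i}$ (using $\mathcal{Z}_w\mathcal{Z}_{w'}\subseteq\mathcal{Z}_{w+w'}$ to see $\overline{\mathcal{Z}}=\bar{k}[\cup_w\mathcal{Z}_w]$ is spanned by monomials of pure weight), and (2) is the statement that the $\bar{k}$-linear relations among a $k$-basis of each $\mathcal{Z}_w$ are trivial, i.e.\ injectivity of $\ok\otimes_k\mathcal{Z}\to\overline{\mathcal{Z}}$.

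I expect the main obstacle to be the decoupling-across-weights step together with the descent from $\bar{k}$-linear to $k$-linear relations inside a fixed weight: one must show that the ABP-produced polynomial functions cannot conspire across blocks of different weight (a careful use of the rigidity of solutions of $(-1)$-difference equations and of the shape of $\Phi$), and then that the surviving relation is actually defined over $k$ rather than over $\bar{k}$ — this is exactly the ``defined over $k$'' phenomenon analogous to Baker's theorem alluded to in the introduction, and it is where the special arithmetic of the Anderson--Thakur functions and the $\FF_q$-linearity of Frobenius twisting are essential. The two corollaries are then immediate: a nontrivial monomial $Z$ of weight $w$ together with $1$ forms a $\bar{k}$-linearly independent set by Theorem~\ref{T:Main Thm} (taking $\ell=1$, $V_1=\{Z\}$), hence $Z\notin\bar{k}$; and if $Z_1,Z_2$ are nontrivial monomials of distinct weights $w_1\neq w_2$ with $Z_1/Z_2\in\bar{k}$, then $Z_1-(Z_1/Z_2)Z_2=0$ is a nontrivial $\bar{k}$-relation between a weight-$w_1$ monomial and a weight-$w_2$ monomial, contradicting the theorem applied with $V_1=\{Z_1\}$, $V_2=\{Z_2\}$.
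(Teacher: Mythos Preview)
Your final deduction of the corollary from Theorem~\ref{T:Main Thm} is correct and is precisely the paper's (implicit) argument: if $Z_1,Z_2$ are nontrivial monomials of distinct weights with $Z_1/Z_2\in\bar{k}$, then $Z_1-(Z_1/Z_2)Z_2=0$ contradicts the theorem applied with $V_1=\{Z_1\}$, $V_2=\{Z_2\}$ (each a $k$-linearly independent singleton, since MZVs are nonzero). That single sentence is all the corollary requires.

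The rest of your proposal re-sketches the proof of Theorem~\ref{T:Main Thm} itself, which is already established in the paper and not needed here; and in that sketch there are substantive inaccuracies worth noting. The decoupling of distinct weight-blocks is \emph{not} obtained via ``degree/growth estimates'' --- the $(-1)$-twist sends coefficients to their $q$-th roots and does nothing to the $t$-degree, so that mechanism does not work as stated. The paper instead pads the weight-$w_i$ blocks by $\Omega^{w_1-w_i}$ (equivalently, $(t-\theta)^{w_1-w_i}$ on the $\Phi$-side) and specializes the ABP-lifted relation at $t=\theta^{q^N}$: the zeros of $\Omega$ then kill all lower-weight contributions, leaving a relation among the $q^N$-th powers of the top-weight values (see~(\ref{E:psitilde thetaN}) and Step~I of \S4). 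The descent from $\bar{k}$ to $k$ within a fixed weight is likewise not extracted from the tensor-block structure; it comes from normalizing the lifted relation, twisting and subtracting, and using $\{f\in\bar{k}(t):f^{(-1)}=f\}=\FF_q(t)$ to force the relevant coefficients into $\FF_q(t)$, after which specialization at $t=\theta$ yields a $k$-relation (Step~II of \S4). Finally, the paper does not verify the MZ property for MZVs directly from~\cite{AT09}; it passes through Carlitz multiple polylogarithms (Theorem~\ref{T:CMPLsMZVs}, Proposition~\ref{P:CMPLMZ}) and uses that each MZV is a $k$-linear combination of CMPLs at integral points.
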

\subsection{Euler dichotomy}\label{sec:EulerDichMZV} Let $\tilde{\pi}$ be a fundamental period of the Carlitz module defined in (\ref{E:tildepi}). In analogy with Euler's formula for the classical Riemann zeta function at even positive integers, Carlitz \cite{Carlitz} showed that for a positive integer $n$ divisible by $q-1$ one has
\begin{equation}\label{E:Euler-Carlitz relations}
 \zeta_{A}(n)=c_{n}\tilde{\pi}^{n},
\end{equation}where $c_{n}$ is in $k^{\times}$ and can be expressed in terms of Bernoulli-Carlitz numbers and  Carlitz factorials (cf. \cite{Goss96, Thakur04}). We shall call a positive integer $n$ \lq\lq even\rq\rq \hbox{ } if $(q-1)| n$; otherwise it is called \lq\lq odd\rq\rq. Therefore, we shall call a weight $w$ multizeta value $Z$ {\it{Eulerian}} if the ratio $Z/\tilde{\pi}^{w}$ is in $k$. Using Theorem~\ref{T:Main Thm} we have following result.

\begin{theorem}\label{T:EulerDichMZV}
Let $Z_{1},Z_{2}$ be two multizeta values of the same weight $w$. Then either the ratio $Z_{1}/Z_{2}$ is in $k$ or $Z_{1}$ and $Z_{2}$ are algebraically independent over $k$.
\end{theorem}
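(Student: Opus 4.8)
The plan is to bootstrap Theorem~\ref{T:EulerDichMZV} from Theorem~\ref{T:Main Thm}, using in an essential way that the latter produces a \emph{graded} algebra. Write $w\in\NN$ for the common weight of $Z_1,Z_2$ (so $w\ge 1$), and recall that $Z_1,Z_2$ are nonzero by Thakur~\cite{Thakur09a}. Assume $Z_1/Z_2\notin k$; I want to conclude that $Z_1$ and $Z_2$ are algebraically independent over $k$, which — since $\bar{k}/k$ is algebraic — amounts to showing that no nonzero $P\in\bar{k}[X,Y]$ satisfies $P(Z_1,Z_2)=0$. I would first note that $\{Z_1,Z_2\}$ is linearly independent over $k$: a relation $aZ_1+bZ_2=0$ with $a,b\in k$ not both zero would, since both values are nonzero, force $a,b$ nonzero and hence $Z_1/Z_2=-b/a\in k$.

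The key preliminary step will be to promote this to the statement that for \emph{every} $d\in\NN$ the set $\{Z_1^iZ_2^{d-i}:0\le i\le d\}$ of monomials of multizeta values of total weight $dw$ is linearly independent over $k$. To see this I would argue by contradiction: a nontrivial $k$-linear dependence among these monomials is precisely a relation $P_d(Z_1,Z_2)=0$ for some nonzero homogeneous $P_d\in k[X,Y]$ of degree $d$. Dehomogenizing, $\tilde P_d(u):=P_d(u,1)\in k[u]$ is again nonzero, and it has degree at least $1$ (if it were a nonzero constant then $P_d=cY^d$, whence $cZ_2^d=0$, impossible); moreover $\tilde P_d(Z_1/Z_2)=Z_2^{-d}P_d(Z_1,Z_2)=0$. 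Thus $\alpha:=Z_1/Z_2$ is algebraic over $k$, i.e.\ $\alpha\in\bar{k}$, and then $Z_1-\alpha Z_2=0$ is a nontrivial $\bar{k}$-linear relation among $Z_1,Z_2$ — contradicting the $\ell=1$ case of Theorem~\ref{T:Main Thm} applied to the $k$-linearly independent set $\{Z_1,Z_2\}$, which forces $\{1,Z_1,Z_2\}$ to be linearly independent over $\bar{k}$.

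With that in hand, I would suppose toward a contradiction that $P(Z_1,Z_2)=0$ for some nonzero $P\in\bar{k}[X,Y]$, and decompose $P=\sum_{d=0}^{D}P_d$ into its homogeneous components. For each $d\ge 1$ with $P_d\ne 0$, let $V_d$ be the finite set of monomials $Z_1^iZ_2^{d-i}$ appearing in $P_d$ with nonzero coefficient; every element of $V_d$ has total weight $dw$, and by the preliminary step $V_d$ is linearly independent over $k$. Since the weights $dw$ are pairwise distinct, Theorem~\ref{T:Main Thm} applies and shows that $\{1\}\cup\bigcup_d V_d$ is linearly independent over $\bar{k}$. But the relation $P(Z_1,Z_2)=0$ is exactly a $\bar{k}$-linear dependence among these very elements (the constant term of $P$ against $1$, and the coefficients of each $P_d$ against the monomials of $V_d$), with not all coefficients zero because $P\ne 0$ — a contradiction. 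Hence $Z_1$ and $Z_2$ are algebraically independent over $k$.

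I expect the whole argument to be a fairly formal consequence of Theorem~\ref{T:Main Thm}; the one point that genuinely uses its full strength — and the closest thing to an obstacle — is the reduction of an a priori inhomogeneous algebraic relation to the linear-independence setting, which is what forces one to invoke the graded decomposition together with the $\ell=1$ content of Theorem~\ref{T:Main Thm} (via the dehomogenization above) rather than anything more elementary.
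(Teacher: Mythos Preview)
Your proof is correct and follows essentially the same approach as the paper's. Both arguments rest on the two key facts drawn from Theorem~\ref{T:Main Thm}: (i) if $Z_1/Z_2\notin k$ then the $\ell=1$ case forces $Z_1/Z_2$ to be transcendental over $k$, and (ii) the graded structure reduces any algebraic relation to a homogeneous one, which upon dehomogenization contradicts (i). The only difference is organizational: the paper first invokes the graded decomposition to extract a single homogeneous relation $F(Z_1,Z_2)=0$ over $k$ and then divides by $Z_2^d$, whereas you first establish the $k$-linear independence of all degree-$d$ monomials (via the same dehomogenization) and then feed all homogeneous components of $P$ simultaneously into the full statement of Theorem~\ref{T:Main Thm}.
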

\begin{proof}
Suppose that $Z_{1}/Z_{2}\notin k$. Thus, by Theorem~\ref{T:Main Thm} the ratio $Z_{1}/Z_{2}$ is transcendental over $k$. If $Z_{1}$ and $Z_{2}$ are algebraically dependent over $k$, then by Theorem~\ref{T:Main Thm} there exists a homogenous polynomial $F(X,Y)\in k[X,Y]$ of positive degree so that $F(Z_{1},Z_{2})=0$. Let $d$ be the total degree of $F$. Then dividing the equation $F(Z_{1},Z_{2})=0$ by $Z_{2}^{d}$ we see that the ratio $Z_{1}/Z_{2}$ satisfies a nontrivial polynomial over $k$, whence a contradiction.
\end{proof}

Let $Z$ be a MZV of weight $w$. If the ratio $Z/\tilde{\pi}^{w}$ is algebraic over $k$, then by Corollary~\ref{Cor:DescentMZVs} we have the descent property of $Z/\tilde{\pi}^{w}$, and hence we derive the following {\it{Euler dichotomy}} phenomenon from Theorem~\ref{T:EulerDichMZV}.
\begin{corollary}\label{Cor:EulerDich}
Every multizeta value is either Eulerian or is algebraically independent from $\tilde{\pi}$. In particular, every multizeta value of \lq\lq odd \rq\rq weight $w$ is algebraically independent from $\tilde{\pi}$.
\end{corollary}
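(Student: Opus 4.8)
The plan is to deduce Corollary~\ref{Cor:EulerDich} from Theorem~\ref{T:Main Thm} together with the descent statement Corollary~\ref{Cor:DescentMZVs}, following the pattern of the proof of Theorem~\ref{T:EulerDichMZV} but after passing to $(q-1)$-th powers, so that the Carlitz period enters the algebra of multizeta values even in the \lq\lq odd\rq\rq\ case. Fix a multizeta value $Z$ of weight $w$; it is nonzero by~\cite{Thakur09a}. Put $a:=Z^{q-1}$ and $b:=\tilde{\pi}^{w(q-1)}$. Since $(q-1)\mid w(q-1)$, the Euler--Carlitz relation~(\ref{E:Euler-Carlitz relations}) gives $\zeta_{A}(w(q-1))=c_{w(q-1)}\,\tilde{\pi}^{w(q-1)}$ with $c_{w(q-1)}\in k^{\times}$; thus $a$ is a monomial of multizeta values of total weight $w(q-1)$, and $b=c_{w(q-1)}^{-1}\zeta_{A}(w(q-1))$ is a $k^{\times}$-multiple of one.

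First I would record the two-term dichotomy: either $a/b\in k$, or $a$ and $b$ are algebraically independent over $k$. Suppose $a/b\notin k$, equivalently that $\{Z^{q-1},\zeta_{A}(w(q-1))\}$ is $k$-linearly independent; this is a set of monomials of multizeta values of total weight $w(q-1)$, so Theorem~\ref{T:Main Thm} shows $\{1,Z^{q-1},\zeta_{A}(w(q-1))\}$ is linearly independent over $\bar{k}$, whence $a/b\notin\bar{k}$, i.e. $a/b$ is transcendental over $k$. If moreover $a$ and $b$ were algebraically dependent over $k$, pick a nonzero $G\in k[X,Y]$ with $G(a,b)=0$ and decompose it into homogeneous parts $G=\sum_{d\geq 0}G_{d}$; then for $d\geq 1$ the element $G_{d}(a,b)$ is a $k$-linear combination of monomials of multizeta values of weight $dw(q-1)$, hence lies in $\overline{\mathcal{Z}}_{dw(q-1)}$, while $G_{0}\in\bar{k}$, so the gradedness in Theorem~\ref{T:Main Thm}(1) forces every $G_{d}(a,b)$ to vanish. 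Taking $d_{0}\geq 1$ with $G_{d_{0}}\neq 0$ and dividing $G_{d_{0}}(a,b)=0$ by $b^{d_{0}}$ produces a nontrivial polynomial relation for $a/b$ over $k$, contradicting its transcendence. Hence algebraic dependence of $a$ and $b$ forces $a/b\in k$.

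Granting this, the corollary follows at once. If $a$ and $b$ are algebraically independent over $k$, then since $a,b\in k(Z,\tilde{\pi})$ we get $\trdeg_{k}k(Z,\tilde{\pi})=2$, i.e. $Z$ and $\tilde{\pi}$ are algebraically independent over $k$. If instead $a/b\in k$, then $(Z/\tilde{\pi}^{w})^{q-1}=a/b\in k$, so $Z/\tilde{\pi}^{w}$ is algebraic over $k$; Corollary~\ref{Cor:DescentMZVs} then yields $Z/\tilde{\pi}^{w}\in k$, i.e. $Z$ is Eulerian. This proves the first assertion, and the two alternatives are mutually exclusive, since an Eulerian $Z$ is a nonzero $k$-multiple of $\tilde{\pi}^{w}$ and hence algebraically dependent on $\tilde{\pi}$. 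For the \lq\lq in particular\rq\rq\ part, assume $(q-1)\nmid w$. One has $\tilde{\pi}^{w}\in k_{\infty}$ if and only if $(q-1)\mid w$ --- indeed $\tilde{\pi}$ is an explicit element of $k_{\infty}^{\times}$ times a $(q-1)$-th root of $-\theta$, so $k_{\infty}(\tilde{\pi})/k_{\infty}$ has degree $q-1$ --- whence $\tilde{\pi}^{w}\notin k_{\infty}$, while $Z\in k_{\infty}^{\times}$; therefore $Z/\tilde{\pi}^{w}\notin k_{\infty}\supseteq k$, so $Z$ is not Eulerian and must be algebraically independent from $\tilde{\pi}$ by the dichotomy.

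I do not anticipate a serious obstacle: the real content of the argument rests on Theorem~\ref{T:Main Thm} and Corollary~\ref{Cor:DescentMZVs}. The points that require attention are the reduction to $(q-1)$-th powers --- this is exactly what places $\tilde{\pi}^{w(q-1)}$ inside the graded algebra of multizeta values for every $w$, so that the odd-weight case is not left out --- the elementary input $\tilde{\pi}^{w}\notin k_{\infty}$ for $(q-1)\nmid w$, needed only to rule out the Eulerian alternative when $w$ is \lq\lq odd\rq\rq, and the harmless bookkeeping that multiplying by an element of $k^{\times}$ changes neither $k$-linear independence nor algebraic (in)dependence, so that one may freely work with $\tilde{\pi}^{w(q-1)}$ in place of the honest monomial $\zeta_{A}(w(q-1))$.
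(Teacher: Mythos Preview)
Your proof is correct and follows essentially the same approach as the paper: pass to $(q-1)$-th powers so that $\tilde{\pi}^{w(q-1)}$ becomes a $k^{\times}$-multiple of the multizeta value $\zeta_{A}(w(q-1))$, invoke the graded-algebra structure of Theorem~\ref{T:Main Thm} to obtain the dichotomy for the monomials $Z^{q-1}$ and $\zeta_{A}(w(q-1))$, and use the descent Corollary~\ref{Cor:DescentMZVs} to upgrade $Z/\tilde{\pi}^{w}\in\bar{k}$ to $Z/\tilde{\pi}^{w}\in k$. The only cosmetic difference is that the paper packages the same-weight dichotomy as Theorem~\ref{T:EulerDichMZV} and cites it (tacitly extending it from multizeta values to monomials), whereas you reprove that dichotomy inline directly from Theorem~\ref{T:Main Thm}; your organization is arguably cleaner for exactly that reason.
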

\begin{proof}
Let $Z$ be a multizeta value of weight $w$ and suppose that $Z/\tilde{\pi}^{w}\notin k$. Thus by Corollary~\ref{Cor:DescentMZVs} we have $Z/\tilde{\pi}^{w}\notin \bar{k}$. It follows from (\ref{E:Euler-Carlitz relations}) that  $Z^{q-1}/\zeta_{A}(w(q-1))\notin \bar{k}$. So Theorem~\ref{T:EulerDichMZV} implies the algebraic independence of $Z^{q-1}$ and $\zeta_{A}(w(q-1))$ over $k$, whence the algebraic independence of $Z$ and $\tilde{\pi}^{w}$ (because of (\ref{E:Euler-Carlitz relations})), which implies the algebraic independence of $Z$ and $\tilde{\pi}$.

To show the second assertion, we need to only consider $q>2$ since all positive integers are \lq\lq even\rq\rq\hbox{ } in the case of $q=2$. For $q>2$ one observes that from the definition (\ref{E:tildepi}) we have $\tilde{\pi}^{w}\notin k_{\infty}$ if $w$ is not a multiple of $q-1$. Since every MZV is in $k_{\infty}$, every MZV of \lq\lq odd\rq\rq \hbox{ }weight is not Eulerian and so the assertion follows from the previous one.
\end{proof}

\begin{remark}
Thakur \cite[Thm.~5.10.12]{Thakur04} first observed that $\zeta_{A}(2, 1)$ or $\zeta_{A}(1,2)$ is not Eulerian in the case of $q=2$ (note that MZVs and $\tilde{\pi}$  belong to $k_{\infty}$ in this case), and hence one of them is algebraically independent from Carlitz zeta values when $q=2$. In other words, there is an MZV which is algebraically independent from all Carlitz zeta values. This gives a positive answer of the analogous question in \cite[p.~231]{Andre04}.
\end{remark}

\section{Linear independence of special values occurring from difference equations}
In this section, the main goal is to establish a linear independence result of certain special values occurring from difference equations which is applied to prove Theorem~\ref{T:Main Thm}.
\subsection{Twisting operators}

For any integer $n$, we define the $n$-fold twisting on the field of Laurent series $\laurent{\CC_\infty}{t}$:
\[
     \begin{array}{ccc}
       \laurent{\CC_\infty}{t} & \rightarrow & \laurent{\CC_\infty}{t} \\
       f:=\sum a_{i}t^{i} & \mapsto & f^{(n)}:=\sum a_{i}^{q^{n}}t^{i}. \\
     \end{array}
   \  \]
We note that
\begin{equation}\label{E:f=f(-1)}
 \left\{ f\in \bar{k}(t); \hbox{ }f^{(-1)}=f  \right\}=\FF_{q}(t).
\end{equation}
Note further that the $n$-fold twisting is extended to act on $\Mat_{m\times n}(\laurent{\CC_{\infty}}{t})$ entrywise.

Throughout this paper, we fix a $(q-1)$-th root of $-\theta$ and denote it by $\tilde{\theta}$. The function
\[  \Omega(t):=\tilde{\theta}^{-q}\prod_{i=1}^{\infty} \left( 1-\frac{t}{\theta^{q^{i}}} \right) \]
has a power series expansion in $t$, and is entire on $\CC_{\infty}$ and satisfies the following difference equation:
\begin{equation}\label{E:Omega}
\Omega^{(-1)}(t)=(t-\theta) \Omega(t).
\end{equation}
Moreover, the following value
\begin{equation}\label{E:tildepi}
 \tilde{\pi}:=\frac{1}{\Omega(\theta)}
\end{equation}
is a fundamental period of the Carlitz module (cf. \cite{AT90, ABP04}).
\subsection{Anderson-Brownawell-Papanikolas criterion} We define $\mathcal{E}$ to be the ring consisting of formal power series
\[ \sum_{n=0}^{\infty}a_{n}t^{n}\in \power{\bar{k}}{t} \]
such that
\[ \lim_{n\rightarrow \infty}\sqrt[n]{|a_{n}|_{\infty}}=0,\hbox{ }[k_{\infty}\left(a_{0},a_{1},a_{2},\ldots  \right):k_{\infty}]<\infty .\]
Then any $f$ in $\mathcal{E}$ has an infinite radius of convergence with respect to $|\cdot|_{\infty}$ and has the property that $f(\alpha)\in \overline{k_{\infty}}$ for any $\alpha\in \overline{k_{\infty}}$. Any function in $\mathcal{E}$ is called an entire function and one observes that $\Omega\in \mathcal{E}$.

To state and show the main result of this section, we shall review the ABP-criterion (for abbreviation, ABP stands for Anderson-Brownawell-Papanikolas).
\begin{theorem}\textnormal{(Anderson-Brownawell-Papanikolas, \cite[Thm.~3.1.1]{ABP04})}\label{T:ABP}
Fix a matrix $\Phi\in \Mat_{\ell}(\bar{k}[t])$ so that $\det \Phi=c(t-\theta)^{s}$ for some $c\in \bar{k}^{\times}$ and some nonnegative integer $s$. Suppose that there exists a vector $\psi\in \Mat_{\ell\times 1}(\mathcal{E})$ satisfying
\[ \psi^{(-1)}=\Phi\psi.\]Then for each row vector $\rho\in \Mat_{1\times \ell}(\bar{k})$ such that $\rho \psi(\theta)=0$, there exists a vector $P\in \Mat_{1\times \ell}(\bar{k}[t])$ such that \[ P(\theta)=\rho\hbox{ and }P\psi=0 .\]
\end{theorem}

The spirit of the ABP-criterion is that every $\bar{k}$-linear relation among the entries of $\psi(\theta)$ can be lifted to a $\bar{k}[t]$-linear relation among the entries of $\psi$.
\begin{remark}
In \cite{C09}, a refined version of the ABP-criterion which relaxes the condition of $\Phi$ and the specialization of $\psi$ at more algebraic points is given. But here the ABP-criterion is sufficient for our proof.
\end{remark}

\subsection{Some notation}
Considering square matrices $M_{i}\in \Mat_{n_{i}}(\power{\CC_{\infty}}{t})$  for $i=1,\ldots,\ell$, we define $\oplus_{i=1}^{\ell}M_{i}$ to be the block diagonal matrix
\[\left(
    \begin{array}{ccc}
      M_{1} &  &  \\
       & \ddots &  \\
       &  & M_{\ell} \\
    \end{array}
  \right).
   \] For column vectors $\bv_{1},\ldots,\bv_{m}$ with entries in $\power{\CC_{\infty}}{t}$, we define $\oplus_{i=1}^{m} \bv_{i}$ to be the column vector
   \[ \left( \bv_{1}^{\rm{\tiny{tr}}}, \ldots, \bv_{m}^{\rm{\tiny{tr}}} \right)^{\rm{\tiny{tr}}}  .\]

\subsection{A linear independence result}
\begin{definition}\label{Def:DefMZ}A nonzero element $Z\in \overline{k_{\infty}}^{\times}$ is said to have the {\it{MZ}} (Multizeta) property with weight $w$  if there exists $\Phi\in \Mat_{d}(\bar{k}[t])$ and $\psi\in \Mat_{d\times 1}(\mathcal{E})$ with $d\geq 2$ so that
\begin{enumerate}
\item[$(1)$] $\psi^{(-1)}=\Phi \psi$ and $\Phi$ satisfies the conditions of the ABP-criterion;
\item[$(2)$] The last column of $\Phi$ is of the form $(0,\ldots,1)^{\tiny{\rm{tr}}}$ (whose entries are zero except the last entry being $1$);
\item[$(3)$] $\psi(\theta)$ is of the form (with specific first and last entries):
\[ \psi(\theta)=\left(
                  \begin{array}{c}
                    1/\tilde{\pi}^{w} \\
                    \vdots \\
                    cZ/\tilde{\pi}^{w} \\
                  \end{array}
                \right)
  \] for some $c\in k^{\times}$ ;
\item[$(4)$]  for any positive integer $N$, $\psi(\theta^{q^{N}})$ is of the form:
\[  \psi(\theta^{q^{N}})=\left(
                  \begin{array}{c}
                    0 \\
                    \vdots \\
                    \left(cZ/\tilde{\pi}^{w}\right)^{q^{N}}\\
                  \end{array}
                \right)\]
                (whose entries are zero except the last entry).
\end{enumerate}
\end{definition}
\begin{remark}
We will see from Theorem~\ref{T:LinIndepPsitheta} that any nonzero $Z$ having the {\it{MZ}} property has a unique weight.
\end{remark}
\begin{remark}
In \S\S~\ref{sub:CMPLs} we introduce the Carlitz multiple polylogarithm $\Li_{\mathfrak{s}}$ associated to each $r$-tuple $\mathfrak{s}=(s_{1},\ldots,s_{r})\in \NN^{r}$, and show in Proposition~\ref{P:CMPLMZ} that any nonzero value as $\Li_{\mathfrak{s}}$ at an algebraic point satisfies the MZ property with weight $s_{1}+\cdots+s_{r}$. We mention that in this situation the $d$ of Definition~\ref{Def:DefMZ} is $r+1$.
\end{remark}

\begin{proposition}\label{P:monomialsMZ}
Let $Z_{1},\ldots,Z_{n}$ be nonzero values in $\overline{k_{\infty}}^{\times}$ having the {\it{MZ}} property with weights $w_{1},\ldots,w_{n}$ respectively. For nonnegative integers $m_{1},\dots,m_{n}$, not all zero,  the monomial \[ Z_{1}^{m_{1}}\cdots Z_{n}^{m_{n}} \]
has the {\it{MZ}} property with weight $\sum_{i=1}^{n} m_{i}w_{i}$.
\end{proposition}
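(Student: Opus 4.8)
The plan is to build the desired $\Phi$ and $\psi$ for the monomial by taking a suitable tensor-product/block construction of the data attached to the individual $Z_i$, since tensoring of Frobenius difference equations is exactly what turns products of entries into entries of the tensored solution vector. First I would reduce to the case of an ordinary power $Z^m$ of a single $Z$ with $\textit{MZ}$ property of weight $w$: given $(\Phi_i,\psi_i)$ realizing $Z_i^{m_i}$ with weight $m_i w_i$, I then need to combine two such data $(\Phi,\psi)$, $(\Phi',\psi')$ realizing values $Z$ (weight $w$) and $Z'$ (weight $w'$) into one realizing $ZZ'$ with weight $w+w'$; iterating handles the whole monomial. So the heart of the matter is a binary product construction plus a power construction.

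For the binary product, write $\psi=(1/\tilde\pi^{w},\ldots,cZ/\tilde\pi^{w})^{\tr}\in\Mat_{d\times 1}(\cE)$ and $\psi'=(1/\tilde\pi^{w'},\ldots,c'Z'/\tilde\pi^{w'})^{\tr}\in\Mat_{d'\times1}(\cE)$. I would take the Kronecker product $\psi\otimes\psi'$, which satisfies $(\psi\otimes\psi')^{(-1)}=(\Phi\otimes\Phi')(\psi\otimes\psi')$ because twisting is a ring endomorphism; its first entry is $1/\tilde\pi^{w+w'}$ and its last entry is $cc'ZZ'/\tilde\pi^{w+w'}$, with $cc'\in k^{\times}$, which is exactly the shape required by (3). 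Property (4) for $\psi\otimes\psi'$ is immediate from (4) for the factors: at $t=\theta^{q^N}$ all but the last entry of each factor vanish, so the same holds for the product, and the surviving entry is $(cc'ZZ'/\tilde\pi^{w+w'})^{q^N}$. The determinant condition is fine: $\det(\Phi\otimes\Phi')=(\det\Phi)^{d'}(\det\Phi')^{d}$ is again a constant times a power of $(t-\theta)$, and entire-ness of the entries of $\psi\otimes\psi'$ follows since $\cE$ is a ring. The issue that needs care is condition (2): the last column of $\Phi\otimes\Phi'$ is not $(0,\ldots,0,1)^{\tr}$ in general. I would fix this exactly as in the Anderson--Thakur and Chang--Yu constructions: replace $\Phi\otimes\Phi'$ by a conjugate/extended block matrix obtained by adjoining one extra coordinate that carries the constant function $1$ (using that the last columns of $\Phi$ and $\Phi'$ are $e_d$, $e_{d'}$, so the last entry of $\psi\otimes\psi'$ already behaves like a ``$\Li$-type'' entry) — concretely one passes to a block-upper-triangular $\Phi''$ of size $dd'$ (or $dd'{+}\text{something}$) whose bottom-right $1\times1$ block is $(1)$ and whose solution vector is a rearrangement of $\psi\otimes\psi'$ with the entry $cc'ZZ'/\tilde\pi^{w+w'}$ moved to the bottom; reordering coordinates is a conjugation by a permutation matrix and preserves all of (1)--(4). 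The power construction $Z^m$ is the special case $(\Phi',\psi')=(\Phi,\psi)$, $Z'=Z$ of the above, iterated $m-1$ times; here one should be slightly careful that the ``$1/\tilde\pi^{w}$'' first entry is genuinely the same across factors so that the tensor really produces $1/\tilde\pi^{mw}$ in the first slot, which it does.

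The main obstacle I expect is purely bookkeeping: arranging the block/permutation so that simultaneously (i) the first entry of the solution vector is $1/\tilde\pi^{\text{(total weight)}}$, (ii) the last entry is $c\cdot(\text{monomial})/\tilde\pi^{\text{(total weight)}}$ with $c\in k^{\times}$, (iii) the last column of the matrix is $(0,\ldots,0,1)^{\tr}$, and (iv) the vanishing at $\theta^{q^N}$ of all but the last entry is preserved. Properties (i), (ii), (iv) and the determinant/entireness conditions all pass through tensor products and coordinate permutations transparently; it is only the normalization (iii) that forces the extra block, and one must check that adjoining it does not disturb (iv) — which holds because the adjoined entry is the constant $1$ only in the matrix, while in $\psi$ the adjoined coordinate can be taken to be (a twist-translate of) the last entry itself, whose value at $\theta^{q^N}$ is the wanted $q^N$-th power. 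I would carry out the two-factor case in full detail and then induct on $n$ and on $\sum m_i$.
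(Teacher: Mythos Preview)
Your approach via the Kronecker product is exactly the paper's: it simply sets $\Phi=\Phi_1^{\otimes m_1}\otimes\cdots\otimes\Phi_n^{\otimes m_n}$ and $\psi=\psi_1^{\otimes m_1}\otimes\cdots\otimes\psi_n^{\otimes m_n}$, and observes that all of (1)--(4) transfer. Your verification of (1), (3), (4) and the determinant/entireness conditions is fine.

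The only issue is your worry about condition~(2), which is unfounded and leads you into an unnecessary and somewhat ill-specified ``extra block'' construction. If the last column of $\Phi$ is $e_d$ and the last column of $\Phi'$ is $e_{d'}$, then the last column of $\Phi\otimes\Phi'$ \emph{is} $e_{dd'}$: in the standard indexing, the entry in row $(i-1)d'+k$ and column $dd'$ of $\Phi\otimes\Phi'$ equals $\Phi_{id}\,\Phi'_{kd'}=\delta_{id}\,\delta_{kd'}$, which is nonzero only for $i=d$, $k=d'$. So condition~(2) passes through the Kronecker product automatically, no permutation or adjoined coordinate is needed, and the paper's one-line proof is complete as stated. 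You should drop the block-adjunction step entirely; apart from being unnecessary, your description of what the adjoined $\psi$-coordinate should be (``a twist-translate of the last entry itself'') is not precise enough to verify (3) and (4) for the enlarged system.
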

\begin{proof}  We consider the Kronecker product:
\[\Phi:= \Phi_{1}^{\otimes m_{1}}\otimes\cdots \otimes\Phi_{n}^{\otimes m_{n}}\hbox{ and }\psi:=  \psi_{1}^{\otimes m_{1}}\otimes\cdots \otimes\psi_{n}^{\otimes m_{n}}.  \] Then one has $\psi^{(-1)}=\Phi \psi$. Since each triple $(\Phi_{i},\psi_{i},Z_{i})$ satisfies $(1)-(4)$ of Definition~\ref{Def:DefMZ}, one sees that the triple $(\Phi,\psi,Z_{1}^{m_{1}}\cdots Z_{n}^{m_{n}})$ satisfies the conditions of Definition~\ref{Def:DefMZ} and hence $Z_{1}^{m_{1}}\cdots Z_{n}^{m_{n}}$ has the {\it{MZ}} property with weight $\sum_{i=1}^{n} m_{i}w_{i}$.

\end{proof}

 The main result in this section is stated as follows, and its proof occupies the next section.
 \begin{theorem}\label{T:LinIndepPsitheta}
Let $w_{1},\ldots,w_{\ell}$ be $\ell$ distinct positive integers. Let $V_{i}$ be a finite set of values in $\overline{k_{\infty}}^{\times}$ having the ${\it{MZ}}$-property with weight $w_{i}$, and suppose that $V_{i}$ is a linearly independent set over $k$ for $i=1,\ldots,\ell$. Then the union
\[ \left\{ 1 \right\}\bigcup_{i=1}^{\ell} V_{i} \]
is a linearly independent set over $\bar{k}$.
\end{theorem}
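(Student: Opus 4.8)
\textbf{Proof strategy for Theorem~\ref{T:LinIndepPsitheta}.}
The plan is to argue by contradiction using the ABP-criterion. Suppose the union $\{1\}\cup\bigcup_{i=1}^{\ell}V_i$ is linearly dependent over $\bar k$. Write $V_i=\{Z_{i,1},\ldots,Z_{i,u_i}\}$, and for each value $Z_{i,j}$ let $(\Phi_{i,j},\psi_{i,j})$ be the data furnished by Definition~\ref{Def:DefMZ}, so that $\psi_{i,j}^{(-1)}=\Phi_{i,j}\psi_{i,j}$, the last column of $\Phi_{i,j}$ is $(0,\ldots,0,1)^{\mathrm{tr}}$, and $\psi_{i,j}(\theta)$ has first entry $1/\tilde\pi^{w_i}$ and last entry $c_{i,j}Z_{i,j}/\tilde\pi^{w_i}$. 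First I would assemble a single block-diagonal system: set $\Phi:=(1)\oplus\bigoplus_{i,j}\Phi_{i,j}$ and $\psi:=(1)\oplus\bigoplus_{i,j}\psi_{i,j}$, so $\psi^{(-1)}=\Phi\psi$ and $\det\Phi=c(t-\theta)^{s}$ still holds. A $\bar k$-linear dependence among $1$ and the $Z_{i,j}$ can, after multiplying by $\tilde\pi$-powers appropriately within each weight block and using $\psi(\theta)$'s known entries, be rewritten as $\rho\,\psi(\theta)=0$ for some nonzero $\rho\in\Mat_{1\times\ell'}(\bar k)$ that picks out the ``$1$'' coordinate, the first coordinates of the weight-$w_i$ blocks, and the last coordinates of all blocks. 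Applying Theorem~\ref{T:ABP} yields $P\in\Mat_{1\times\ell'}(\bar k[t])$ with $P(\theta)=\rho$ and $P\psi=0$.

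The heart of the argument is then to analyze the functional identity $P\psi=0$ and extract enough rigidity to force $\rho$ to be the trivial relation. Following the Anderson--Brownawell--Papanikolas philosophy, I would exploit the twisting operator: from $\psi^{(-1)}=\Phi\psi$ one gets $P\psi=0\iff (P^{(-1)}\Phi)\psi^{(-1)}\cdot(\text{shift})$, more precisely one replaces $P$ by $P^{(1)}\Phi^{(1)}\cdots$ or by $P\Phi^{(-1)}\cdots$ to produce a new polynomial relation, and iterates. Because the last column of each $\Phi_{i,j}$ is $(0,\ldots,0,1)^{\mathrm{tr}}$, the last entry of $\psi_{i,j}$ satisfies a particularly simple inhomogeneous twist relation, and one can track how the ``last coordinate'' part of $P$ evolves. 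The key structural input is condition $(4)$: specializing the derived polynomial identities not at $\theta$ but at $\theta^{q^N}$ for large $N$ kills the first coordinates (they vanish there) and leaves only the surviving last-coordinate contributions $\bigl(c_{i,j}Z_{i,j}/\tilde\pi^{w_i}\bigr)^{q^N}$. This is where the distinctness of the weights $w_1,\ldots,w_\ell$ enters decisively: the factors $\tilde\pi^{-w_i q^N}$ have different $|\cdot|_\infty$-growth rates in $N$, so a nontrivial relation mixing different weight blocks cannot persist for all $N$; one separates the problem weight by weight. Within a single weight $w_i$, the relation that survives is a $k$-linear (after using $(\ref{E:f=f(-1)})$ to descend coefficients to $\FF_q(t)$ and then evaluating) relation among $Z_{i,1},\ldots,Z_{i,u_i}$, contradicting the hypothesis that $V_i$ is $k$-linearly independent.

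More concretely, the steps in order: (i) build the block system and reduce the assumed dependence to $\rho\psi(\theta)=0$; (ii) invoke ABP to get $P\in\Mat_{1\times\ell'}(\bar k[t])$ with $P(\theta)=\rho$, $P\psi=0$; (iii) split $P$ and $\psi$ along weight blocks and, using the twist relations and $\det\Phi=c(t-\theta)^s$, normalize $P$ so that its entries have controlled poles only at $t=\theta$; (iv) evaluate the resulting identities at $t=\theta^{q^N}$ and use condition $(4)$ together with the growth of $\tilde\pi^{-w_iq^N}$ to show the cross-weight coefficients must vanish, leaving one identity per weight; (v) in each weight block, use $f^{(-1)}=f\Rightarrow f\in\FF_q(t)$ to descend the coefficient functions, specialize at $t=\theta$, and conclude a nontrivial $k$-linear relation among the members of $V_i$ --- the desired contradiction. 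I expect step (iv), the asymptotic separation of the weight blocks via evaluation at $\theta^{q^N}$ and the careful bookkeeping of which coefficients of $P$ can contribute, to be the main obstacle: one must verify that the twisting manipulations do not introduce spurious poles or cancellations and that condition $(4)$ really does isolate the last-coordinate data cleanly. The distinctness of weights and the special shape of the last column of $\Phi$ are precisely the tools that make this separation work.
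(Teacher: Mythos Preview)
Your overall architecture is right: assemble a block system, apply the ABP-criterion to lift the $\bar k$-relation to a $\bar k[t]$-relation $P\psi=0$, specialize at $t=\theta^{q^N}$ to exploit condition~(4), and then use the fixed-point fact $f^{(-1)}=f\Rightarrow f\in\FF_q(t)$ to descend to $k$. Steps (i), (ii), (v) match the paper closely. The problem is step (iv), your weight-separation mechanism.

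You propose to separate the weight blocks by comparing the archimedean growth of $\tilde\pi^{-w_iq^N}$ as $N\to\infty$. This does not work as stated. After specializing $P\psi=0$ at $t=\theta^{q^N}$ and using condition~(4), the surviving relation has the form
\[
P_0(\theta^{q^N})+\sum_{i,j}Q_{i,j}(\theta^{q^N})\bigl(c_{i,j}Z_{i,j}/\tilde\pi^{w_i}\bigr)^{q^N}=0,
\]
where each $Q_{i,j}\in\bar k[t]$ is the last-coordinate entry of $P$ in its block. But $|Q_{i,j}(\theta^{q^N})|_\infty$ itself grows like $q^{(\deg Q_{i,j})q^N}$, and the ABP-criterion gives you no control over $\deg Q_{i,j}$. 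So the exponent governing the $(i,j)$-term is roughly $(\deg Q_{i,j}-\tfrac{w_iq}{q-1}+\deg(c_{i,j}Z_{i,j}))q^N$, and nothing prevents two blocks of different weight from having the same exponent. A single identity valid for all large $N$ imposes only one matching condition on these exponents, not a contradiction. In short, the polynomial coefficients can compensate for the $\tilde\pi$-power discrepancy, and the ``different growth rates'' heuristic collapses.

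The paper avoids this entirely by a normalization you have not introduced: it orders $w_1>\cdots>w_\ell$, proceeds by induction on the top weight, and then replaces each block $(\Phi_{ij},\psi_{ij})$ of weight $w_i$ by $\bigl((t-\theta)^{w_1-w_i}\Phi_{ij},\ \Omega^{w_1-w_i}\psi_{ij}\bigr)$. This still satisfies the ABP hypotheses, and now \emph{every} entry of every block with $i\geq 2$ vanishes \emph{exactly} at $t=\theta^{q^N}$, because $\Omega$ has a simple zero there and $w_1-w_i>0$. Specialization at $\theta^{q^N}$ then yields a relation involving only the top-weight values $Z_{1j}^{q^N}$, with no growth comparison needed; taking $q^N$-th roots gives a $\bar k$-relation inside $V_1$ alone. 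That is the paper's Step~I. The descent to $k$ (Step~II) is then carried out purely within the single weight $w_1$, via the $(\mathbf G-\mathbf G^{(-1)}\Phi)\psi=0$ trick you describe in (v), but it relies on first having reduced to a minimal $\bar k$-dependence within $V_1$ so that one can argue by contradiction against the $\bar k$-independence of $Z_2,\ldots,Z_m$ when showing $g_{jd_j}^{(-1)}=g_{jd_j}$.

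So the missing idea is the $\Omega^{w_1-w_i}$-twist of the lower-weight blocks (together with the induction on weight), which converts your hoped-for asymptotic separation into an exact vanishing. Once you insert that, your outline becomes essentially the paper's proof.
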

\section{Proof of Theorem~\ref{T:LinIndepPsitheta} and a descent property}
In this section, we give a proof of Theorem~\ref{T:LinIndepPsitheta}.  Let notation and assumptions be given in Theorem~\ref{T:LinIndepPsitheta}. Without loss of generality, we may assume that $w_{1}>\cdots> w_{\ell}$. Suppose on the contrary that the set
\[ \left\{ 1 \right\}\bigcup_{i=1}^{\ell} V_{i} \] is linearly dependent over $\bar{k}$.
 By induction on the weight, we may further assume that there are nontrivial $\bar{k}$-linear relations  connecting $V_{1}$ and $\left\{ 1 \right\}\bigcup_{i=2}^{\ell} V_{i}$. Under such hypotheses, we complete the proof in the following two steps.
 \begin{enumerate}
 \item[{\bf{Step I}}]: We show that $V_{1}$ is a linearly dependent set over $\bar{k}$;

 \item[{\bf{Step II}}]: We show that $V_{1}$ is a linearly dependent set over $k$, whence a contradiction.
 \end{enumerate}

\subsection{Proof of Step I}
In this step, our goal is to show that $V_{1}$ is a linearly dependent set over $\bar{k}$. Let $V_{i}$ consist of  $\left\{ Z_{i1},\ldots,Z_{im_{i}} \right\}$ of the same weight $w_{i}$ for $i=1,\ldots,\ell$. For $1\leq i\leq \ell$, since $Z_{ij}$ has the {\it{MZ}} property there exists $\Phi_{ij}\in \Mat_{d_{ij}}(\bar{k}[t])$ and $\psi_{ij}\in \Mat_{d_{ij}\times 1}(\mathcal{E})$ (with $d_{ij}\geq 2$) satisfying Definition~\ref{Def:DefMZ} (corresponding to the $Z_{ij}$) for $j=1,\ldots,m_{i}$.

 Define the block diagonal matrix
 \[ \tilde{\Phi}:=\oplus_{i=1}^{\ell} \left(\oplus_{j=1}^{m_{i}} (t-\theta)^{w_{1}-w_{i}} \Phi_{ij} \right)     \]
and the column vector
\[ \tilde{\psi}:=  \oplus_{i=1}^{\ell} \left(\oplus_{j=1}^{m_{i}} \Omega^{w_{1}-w_{i}} \psi_{ij} \right) .\]
Then one has $\tilde{\psi}^{(-1)}=\tilde{\Phi}\tilde{\psi}$. From Definition~\ref{Def:DefMZ}, it follows that $\tilde{\psi}(\theta)$ is of the form:
\[\tilde{\psi}(\theta)=  \oplus_{j=1}^{m_{1}} \left(
                                                         \begin{array}{c}
                                                           1/\tilde{\pi}^{w_{1}} \\
                                                           \vdots \\
                                                           (c_{1j}Z_{1j}/\tilde{\pi}^{w_{1}}) \\
                                                         \end{array}
                                                       \right)\oplus_{i=2}^{\ell} \left( \oplus_{j=1}^{m_{i}} \left(
                                                                                                                \begin{array}{c}
                                                                                                                  1/\tilde{\pi}^{w_{1}} \\
                                                                                                                  \vdots \\
                                                                                                                  c_{ij}Z_{ij}/\tilde{\pi}^{w_{1}} \\
                                                                                                                \end{array}
                                                                                                              \right) \right).\]
                                                                                                             Note that  since $w_{1}>w_{i}$ for $2\leq i\leq \ell$ and $\Omega(t)$ has simple zero at $t=\theta^{q^{N}}$ for $N\in \NN$,  for any positive integer $N$ we see that                                                                                                                $\tilde{\psi}(\theta^{q^{N}})$ is of the form
 \begin{equation}\label{E:psitilde thetaN}
 \tilde{\psi}(\theta^{q^{N}})=  \oplus_{j=1}^{m_{1}} \left(
                                                         \begin{array}{c}
                                                           0 \\
                                                           \vdots \\
                                                           0 \\
                                                           (c_{1j}Z_{1j}/\tilde{\pi}^{w_{1}})^{q^{N}} \\
                                                         \end{array}
                                                       \right)\oplus_{i=2}^{\ell} \left( \oplus_{j=1}^{m_{i}} \left(
                                                                                                                \begin{array}{c}
                                                                                                                  0 \\
                                                                                                                  \vdots \\
                                                                                                                  0 \\
                                                                                                                  0 \\
                                                                                                                \end{array}
                                                                                                              \right)
                                                        \right)
  .\end{equation}

Since by assumption $\left\{ 1 \right\}  \bigcup_{i=1}^{\ell}\left\{ Z_{i1},\ldots,Z_{im_{i}} \right\}$ is linearly dependent over $\bar{k}$, there exists a nonzero vector
$\rho$ for which $\rho\tilde{\psi}(\theta)=0$. We write \[\rho=\left( \bv_{11},\ldots,\bv_{1m_{1}},\ldots,\bv_{\ell 1},\ldots,\bv_{\ell m_{\ell}} \right),\]
where $\bv_{ij}\in \Mat_{1\times d_{ij}}(\bar{k})$ for $1\leq j\leq m_{i}$, $1\leq i\leq \ell$. Since we assume that there are non-trivial $\bar{k}$-linear relations connecting $V_{1}$ and $\left\{ 1 \right\}\bigcup_{i=2}^{\ell}V_{i}$, the last entry of $\bv_{1s}$ is nonzero for some $1\leq s\leq m_{1}$.

By Theorem~\ref{T:ABP} for each $1\leq i\leq \ell$ there exists $\bff_{ij}\in \Mat_{1\times d_{ij}}(\bar{k}[t])$ (for $j=1,\ldots,m_{i}$) so that ${\mathbf{F}}:=\left( \bff_{11},\ldots,\bff_{1m_{1}},\ldots,\bff_{\ell 1},\ldots,\bff_{\ell m_{\ell}} \right)$ satisfies
\[{\mathbf{F}}\tilde{\psi}=0\hbox{ and }{\mathbf{F}}(\theta)=\rho .\]
  Since by hypothesis the last entry of $\bv_{1s}$ is nonzero, the last entry of $\bff_{1s}$ is a nontrivial polynomial. We pick an integer $N$ sufficiently large for which the last entry of $\bff_{1s}$ is non-vanishing at $t=\theta^{q^{N}}$. Specializing the equation ${\mathbf{F}}\tilde{\psi}=0$ at $t=\theta^{q^{N}}$ and using (\ref{E:psitilde thetaN}) gives rise to a nontrivial $\bar{k}$-linear relation among  \[ Z_{11}^{q^{N}},\ldots,Z_{1m_{1}}^{q^{N}}  .\]
Since our field is of characteristic $p$, by taking the $q^{N}$-th root from the $\bar{k}$-linear relation above we obtain a nontrivial $\bar{k}$-linear relation among the weight $w_{1}$ values  $\left\{ Z_{11},\ldots,Z_{1m_{1}} \right\}$, as claimed.

\subsection{Proof of Step II}
In this step, our goal is to show that $V_{1}$ is a linearly dependent set over $k$, whence a contradiction and thus we complete the proof of Theorem~\ref{T:LinIndepPsitheta}. According to Step I above, we have shown that $V_{1}$ is linearly dependent over $\bar{k}$. Without confusion with the notation of double index in Step I, for simplicity we write $V_{1}=\left\{ Z_{1},\ldots,Z_{m} \right\}$, and without loss of generality we may assume that $m\geq 2$ and
\[ \dim_{\bar{k}} \bar{k}\textnormal{-Span}\left\{ V_{1}\right\}=m-1 .\]  Again for simplicity and without confusion with the double index above, we let $\Phi_{j}\in \Mat_{d_{j}}(\bar{k}[t])$ and $\psi_{j}\in \Mat_{d_{j}\times 1}(\mathcal{E})$ (with $d_{j}\geq 2$) be associated to the value $Z_{j}$ having the {\it{MZ}} property with  weight $w_{1}$ for $j=1,\ldots,m$.

Define the block diagonal matrix  \[\Phi:=\oplus_{j=1}^{m}\Phi_{j} \]
and define the column vector
\[ \psi:=\oplus_{j=1}^{m}\psi_{j}  .\]Notice that
\begin{equation}\label{E:psitheta}
 \psi(\theta)=\oplus_{j=1}^{m} \left(
                                                         \begin{array}{c}
                                                           1/\tilde{\pi}^{w_{1}} \\
                                                           \vdots \\
                                                           c_{j}Z_{j}/\tilde{\pi}^{w_{1}}\\
                                                         \end{array}
                                                       \right) \end{equation}for some $c_{j}\in k^{\times}$, and for $N\in \NN$ we have
\begin{equation}\label{E:psitheta qN}
   \psi(\theta^{q^{N}})=\oplus_{j=1}^{m} \left(
                                                         \begin{array}{c}
                                                          0\\
                                                           \vdots \\
                                                           (c_{j}Z_{j}/\tilde{\pi}^{w_{1}})^{q^{N}}\\
                                                         \end{array}
                                                       \right).
\end{equation}

 Without loss of generality, we may assume that $Z_{1}\in \bar{k}\textnormal{-Span}\left\{ Z_{2},\ldots,Z_{m} \right\}$, and so by hypothesis $\left\{ Z_{2},\ldots,Z_{m} \right\}$ is linearly independent over $\bar{k}$. By the ABP-criterion (Theorem~\ref{T:ABP}) there exists vectors
 $\bff_{j}=\left( \bff_{j1},\ldots,\bff_{jd_{j}} \right)\in \Mat_{1\times d_{j}}(\bar{k}[t])$ for $j=1,\ldots,m$ so that if we put $\bF:=\left( \bff_{1},\ldots,\bff_{m} \right)$ then we have
\begin{equation}
\bF\psi=0, \hbox{ }\bff_{1d_{1}}(\theta)=1 \hbox{ and }\bff_{jh}(\theta)=0\hbox{ for all }1\leq h< d_{j}.
\end{equation}
For convenience we shall refer \lq\lq(i,j)-component of $\bF$\rq\rq to the entry ${\bf{f}}_{ij}$ with double index $ij$.

We divide the vector $\bF$ by $\bff_{1d_{1}}$, and write $\bG:=\frac{1}{\bff_{1d_{1}}}\bF$. Let $d:=\sum_{j=1}^{m}d_{j}$. Note that the vector $\bG$ is of the form
\[ \bG=\left( \bg_{11},\ldots,1,\ldots,\bg_{m1},\ldots,\bg_{md_{m}}  \right)\in \Mat_{1\times d}(\bar{k}(t)), \]
where  $1$ is corresponding to the $(1,d_{1})$-component of $\bG$,
and we have
\begin{equation} \label{E:vanishing of Gij theta}
\bG\psi=0\hbox{ and }\bg_{jh}(\theta)=0\hbox{ for all }1\leq h<d_{j}.
\end{equation}

We use the $(-1)$-fold twisting action on $\bG \psi=0$, and so obtain $\bG^{(-1)}\Phi \psi=0$. Subtracting this equation from $\bG\psi=0$ we obtain that
\begin{equation}\label{E:P-P-1Phi}
\left(\bG- \bG^{(-1)}\Phi \right)\psi=0   .
\end{equation}
Note that the last column of each matrix $\Phi_{j}$ is $(0,\ldots,0,1)^{\tiny{\hbox{tr}}}$, and hence the $(1,d_{1})$-component of $\bG- \bG^{(-1)}\Phi $ is zero since the $(1,d_{1})$-component of the vector $\bG$ is $1$. We further note that the $(1,\sum_{i=1}^{j}d_{i})$-entry of $\bG- \bG^{(-1)}\Phi$ is equal to
\[\bg_{jd_{j}}-\bg_{jd_{j}}^{(-1)} \hbox{ for }j=2,\ldots,m.\]
We claim that $\bg_{jd_{j}}-\bg_{jd_{j}}^{(-1)}=0 \hbox{ for }j=2,\ldots,m$.

To prove the claim above, suppose on the contrary that there exists some $2\leq j\leq m$ for which $\bg_{jd_{j}}-\bg_{jd_{j}}^{(-1)}$ is nonzero. We pick an $N\in \NN$ sufficiently large for which all entries of $\left(\bG- \bG^{(-1)}\Phi \right)$ are regular at $t=\theta^{q^{N}}$, and $\bg_{jd_{j}}-\bg_{jd_{j}}^{(-1)}$ is non-vanishing at $t=\theta^{q^{N}}$. Specializing (\ref{E:P-P-1Phi}) at $t=\theta^{q^{N}}$ and using (\ref{E:psitheta qN}) we obtain a nontrivial $\bar{k}$-linear relations among $Z_{2}^{q^{N}},\ldots,Z_{m}^{q^{N}}$ because the $(1,d_{1})$-component of $\bG- \bG^{(-1)}\Phi $ is zero. By taking a $q^{N}$-th root we obtain a nontrivial $\bar{k}$-linear relation among $Z_{2},\ldots,Z_{m}$, whence a contradiction since we assume that $Z_{2},\ldots,Z_{m}$ are linearly independent over $\bar{k}$.

Thus by (\ref{E:f=f(-1)}) we have that $\bg_{jd_{j}}\in \FF_{q}(t)$ for  $j=2,\ldots,m$. Note that each entry of $\bG$ is regular at $t=\theta$. By specializing the equation $\bG\psi=0$ at $t=\theta$ and using (\ref{E:psitheta}) and (\ref{E:vanishing of Gij theta}), we obtain a nontrivial $k$-linear relation among $Z_{1},\ldots,Z_{m}$. This contradicts to our assumption, and hence we finish the proof.

\subsection{A descent property} To obtain the Eulerian dichotomy phenomenon (cf.~Corollary~\ref{Cor:EulerDich}) we need to establish the following descent property.
\begin{proposition}\label{P:DescentMZ}
Let $Z_{1},\ldots,Z_{n}$ be nonzero values in $\overline{k_{\infty}}^{\times}$ having the MZ property with the same weight $w$. If $\tilde{\pi}^{w},Z_{1},\ldots,Z_{n}$ are linearly dependent over $\ok$, then they are linearly dependent over $k$.
\end{proposition}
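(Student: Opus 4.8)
The strategy is to mimic the bootstrap used in Theorem~\ref{T:LinIndepPsitheta}, but now keeping track of $\tilde{\pi}^{w}$ as an extra "coordinate'' on equal footing with the $Z_{j}$. The key observation is that $\tilde{\pi}^{w}$ itself has the $MZ$ property with weight $w$: indeed, setting $\Phi_{0}:=\bigl(\begin{smallmatrix} (t-\theta)^{w} & 0 \\ 0 & 1\end{smallmatrix}\bigr)$ and $\psi_{0}:=(\Omega^{w},\,?)^{\tiny\mathrm{tr}}$ — more precisely one uses the standard $2\times 2$ (or slightly larger) system whose solution specializes to $(1/\tilde{\pi}^{w},\,1)^{\tiny\mathrm{tr}}$ at $t=\theta$, so that $Z=\tilde{\pi}^{w}$ with $c=1$ — all four conditions of Definition~\ref{Def:DefMZ} are immediate from $\Omega^{(-1)}=(t-\theta)\Omega$, $\Omega(\theta)=1/\tilde{\pi}$, and the simple zeros of $\Omega$ at $t=\theta^{q^{N}}$. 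Thus without loss of generality we may assume $Z_{1}=\tilde{\pi}^{w}$ and restate the claim as: if $Z_{1},\dots,Z_{n}$ are values with the $MZ$ property of weight $w$ that are $\ok$-linearly dependent, then they are $k$-linearly dependent.

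With this reduction, the argument is essentially Step~II of the proof of Theorem~\ref{T:LinIndepPsitheta}, read verbatim. After discarding redundant elements we may assume $\dim_{\ok}\ok\text{-Span}\{Z_{1},\dots,Z_{n}\}=n-1$ with $Z_{2},\dots,Z_{n}$ $\ok$-linearly independent (note $\ok=\bar k$ here, so there is nothing new in this step). Form $\Phi:=\oplus_{j=1}^{n}\Phi_{j}$ and $\psi:=\oplus_{j=1}^{n}\psi_{j}$; the $MZ$ property gives the shapes (\ref{E:psitheta}) and (\ref{E:psitheta qN}) for $\psi(\theta)$ and $\psi(\theta^{q^{N}})$. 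Apply the ABP-criterion (Theorem~\ref{T:ABP}) to lift the $\bar k$-relation $\rho\psi(\theta)=0$ with nonzero $(1,d_{1})$-entry to $\bF\in\Mat_{1\times d}(\bar k[t])$ with $\bF\psi=0$, normalized so that the $(1,d_{1})$-entry $\bff_{1d_{1}}$ satisfies $\bff_{1d_{1}}(\theta)=1$ and all earlier entries of each block vanish at $\theta$. Divide by $\bff_{1d_{1}}$ to get $\bG=(\bg_{11},\dots,1,\dots,\bg_{nd_{n}})$ with $\bG\psi=0$; twist and subtract to obtain $(\bG-\bG^{(-1)}\Phi)\psi=0$, where the $(1,d_{1})$-component vanishes (because the last column of each $\Phi_{j}$ is $(0,\dots,0,1)^{\tiny\mathrm{tr}}$ and the relevant entry of $\bG$ is $1$), and the $(1,\sum_{i\le j}d_{i})$-component equals $\bg_{jd_{j}}-\bg_{jd_{j}}^{(-1)}$. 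If any of these were nonzero, specializing at a large $t=\theta^{q^{N}}$ and using (\ref{E:psitheta qN}) would yield, after extracting a $q^{N}$-th root, a nontrivial $\bar k$-relation among $Z_{2},\dots,Z_{n}$ — contradiction. Hence $\bg_{jd_{j}}^{(-1)}=\bg_{jd_{j}}$, so $\bg_{jd_{j}}\in\FF_{q}(t)$ by (\ref{E:f=f(-1)}) for $j=2,\dots,n$; since the $(1,d_{1})$-entry is $1\in\FF_q(t)$ as well, specializing $\bG\psi=0$ at $t=\theta$ and using (\ref{E:psitheta}) and the vanishing of the non-last block entries produces a nontrivial relation $\sum_{j}\bg_{jd_{j}}(\theta)\,c_{j}Z_{j}=0$ with $\bg_{jd_{j}}(\theta)\in\FF_{q}(\theta)=k$ and $c_{j}\in k^{\times}$; i.e. a nontrivial $k$-linear relation among $Z_{1},\dots,Z_{n}$, as desired.

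The one point requiring a little care — and the only genuine obstacle — is the very first reduction: one must produce a concrete $(\Phi_{0},\psi_{0})$ witnessing that $\tilde{\pi}^{w}$ has the $MZ$ property, including condition~(2) (last column $(0,\dots,1)^{\tiny\mathrm{tr}}$) and conditions~(3)--(4) with the prescribed first entry $1/\tilde{\pi}^{w}$ and last entry $c\tilde{\pi}^{w}/\tilde{\pi}^{w}=c$. Taking $d=2$, $\Phi_{0}=\bigl(\begin{smallmatrix}(t-\theta)^{w}&0\\0&1\end{smallmatrix}\bigr)$ forces the first entry of $\psi_{0}$ to be a scalar multiple of $\Omega^{w}$, which vanishes at every $\theta^{q^{N}}$ but equals $\tilde{\pi}^{-w}$ at $\theta$; the second entry must be $(-1)$-invariant, hence in $\FF_{q}(t)$, and one takes it to be the constant $1$. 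One checks directly that $(\Phi_{0},\psi_{0})$ satisfies (1)--(4) with $Z=\tilde{\pi}^{w}$ and $c=1$. (Everything downstream then goes through unchanged, so I do not anticipate further difficulty.)
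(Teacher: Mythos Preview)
Your proof is correct, and your key observation---that $\tilde{\pi}^{w}$ itself has the $MZ$ property via the $2\times 2$ system $\Phi_{0}=\mathrm{diag}((t-\theta)^{w},1)$, $\psi_{0}=(\Omega^{w},1)^{\mathrm{tr}}$---is a clean way to reduce the proposition to Step~II of Theorem~\ref{T:LinIndepPsitheta} verbatim. All four conditions of Definition~\ref{Def:DefMZ} do hold for this pair with $Z=\tilde{\pi}^{w}$ and $c=1$, exactly as you check in the final paragraph; in particular $\psi_{0}\in\Mat_{2\times 1}(\mathcal{E})$, $\det\Phi_{0}=(t-\theta)^{w}$, and the last column of $\Phi_{0}$ is $(0,1)^{\mathrm{tr}}$. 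The reduction by discarding to a minimally dependent subset is also harmless, since any $k$-relation on the subset extends by zeros to the full list.

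The paper proceeds differently: rather than treat $\tilde{\pi}^{w}$ as an $MZ$ value, it augments the block system by a single $1\times 1$ block, setting $\Phi=(1)\oplus_{i}\Phi_{i}$ and $\psi=(1)\oplus_{i}\psi_{i}$, so that the constant entry $1$ of $\psi(\theta)$ carries the coefficient of $\tilde{\pi}^{w}$ directly (after dividing the given relation by $\tilde{\pi}^{w}$). It then normalizes by the \emph{last} block entry $g=f_{nd_{n}}$ instead of the first, and runs the same twist-and-subtract argument to force the relevant ratios into $\FF_{q}(t)$. The underlying mechanism---ABP lift, twist, specialize at $\theta^{q^{N}}$ to kill the normalized coordinate, then specialize at $\theta$---is identical to yours; the paper's version is marginally more economical (a $1\times1$ block rather than $2\times2$) and self-contained, while your version has the conceptual advantage of literally reusing Step~II rather than reproving it.
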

\begin{proof}
Without loss of generality we may assume that $Z_{1},\ldots,Z_{n}$ are linearly independent over $\ok$. Let $\Phi_{i}\in \Mat_{d_{i}}(\ok[t])$ and $\psi_{i}\in \Mat_{d_{i}\times 1}(\mathcal{E})$ be given in Definition~\ref{Def:DefMZ} associated to $Z_{i}$ for $i=1,\ldots,n$. Let $m=1+\sum_{i=1}^{n}d_{i}$ and define
\[ \Phi:=(1)\oplus_{i=1}^{n}\Phi_{i}\in \Mat_{m}(\ok[t]) \hbox{ and }  \psi:=(1)\oplus_{i=1}^{n}\psi_{i}\in \Mat_{m\times 1}(\mathcal{E}).\]
Then we see that $\psi^{(-1)}=\Phi\psi$ and $(\Phi,\psi)$ satisfies the conditions of Theorem~\ref{T:ABP}.

Let $b\tilde{\pi}^{w}+\sum_{i=1}^{n}a_{i}Z_{i}=0$ for some $b,a_{1},\ldots,a_{n}\in\ok$ with $b\neq 0$ and $a_{n}\neq 0$. For each $1\leq i\leq n$, the last coordinate of $\psi_{i}(\theta)$ is given by $c_{i}Z_{i}/\tilde{\pi}^{w}$ for some $c_{i}\in k^{\times}$. By Definition~\ref{Def:DefMZ}~(3) and  Theorem~\ref{T:ABP} there exist $f\in \ok[t]$ and ${\bf{f}}_{i}=(f_{i1},\ldots,f_{id_{i}})\in \Mat_{1\times d_{i}}(\ok[t])$ so that for each $1\leq i\leq n$,
\[ f(\theta)=b, f_{id_{i}}(\theta)=a_{i}/c_{i} \hbox{ and } f_{ij}(\theta)=0 \hbox{ for }  j=1,\ldots,d_{i}-1,\]
and $P\psi=0$, where $P:=\left(f,{\bf{f}}_{1},\ldots,{\bf{f}}_{n}  \right)\in \Mat_{1\times m}(\ok[t])$.

Let $g=f_{nd_{n}}$ be the last entry of the row vector of ${\bf{f}}_{n}\in\Mat_{1\times d_{n}}(\ok[t])$ and put $\widetilde{P}:=\frac{1}{g}P$. We note that $f(\theta)\neq 0$ and $g(\theta)\neq 0$ and that the last entry of $\widetilde{P}$ is $1$. Using the $(-1)$-twisting operation on the equation $\widetilde{P}\psi=0$ and then subtracting it from $\widetilde{P}\psi=0$ we obtain that
\begin{equation}\label{E:P-PPhipsi}
\left(\widetilde{P}-\widetilde{P}^{(-1)} \Phi\right)\psi=0.
\end{equation}

Note that the last entry of $\widetilde{P}-\widetilde{P}^{(-1)} \Phi$ is zero because of Definition~\ref{Def:DefMZ}~(2). Now we pick a sufficiently large integer $N$ so the all the entries of $\widetilde{P}$ are regular at $t=\theta^{q^{N}}$. By using Definition~\ref{Def:DefMZ}~(4) and specializing (\ref{E:P-PPhipsi}) at $t=\theta^{q^{N}}$ we derive a $\ok$-linear relation between $\tilde{\pi}^{wq^{N}},Z_{1}^{q^{N}},\ldots,Z_{n-1}^{q^{N}}$, whence obtaining a $\ok$-linear relation among $\tilde{\pi}^{w},Z_{1},\ldots,Z_{n-1}$ after taking the $q^{N}$-th root of the equation. From the hypothesis on the $\ok$-linear independence of $\tilde{\pi}^{w},Z_{1},\ldots,Z_{n-1}$ the coefficients of the $\ok$-linear equation obtained above have to be zero, particularly for $i=1,\ldots,n-1$,
\[\left(f/g-(f/g)^{(-1)} \right)(\theta^{q^{N}})=\left(f_{id_{i}}/g-(f_{id_{i}}/g)^{(-1)} \right)(\theta^{q^{N}})= 0 \hbox{ for }N\gg0.\] It follows from (\ref{E:f=f(-1)}) that $f/g,f_{id_{i}}/g\in \FF_{q}(t)$ for $i=1,\ldots,n-1$. By specializing the equation $\widetilde{P}\psi=0$ at $t=\theta$ we obtain the desired result.

\end{proof}

\section{Linear independence of monomials of Carlitz multiple polylogarithms}
In \cite{AT90}, Anderson and Thakur showed that the Carlitz zeta value at $n\in \NN$ can be expressed as a  $k$-linear combination of the $n$-th Carlitz polylogarithm at integral points in $A$ (cf.~\cite[\S\S~3.9]{AT90}). In this section, we first define the Carlitz multiple polylogarithms (abbreviated as CMPLs) and extend the work of Anderson-Thakur to multizeta values. We then show that the nonzero values which are CMPLs at algebraic points satisfy the {\it{MZ}} property and hence using Theorem~\ref{T:LinIndepPsitheta} we derive Theorem~\ref{T:Main Thm2}.

\subsection{Carlitz multiple polylogarithms}\label{sub:CMPLs}
We define $\mathcal{L}_{0}:=1$ and $\mathcal{L}_{i}:=\prod_{j=1}^{i}(\theta-\theta^{q^{j}})$ for $i\in \NN$. For $n\in \NN$, the $n$-th Carlitz polylogarithm is defined by
\[\log_{n}(z):=\sum_{i=0}^{\infty}\frac{z^{q^{i}}}{\mathcal{L}_{i}^{n}}.\] (Note that in \cite{AT90, Goss96} it is called the $n$-th Carlitz multilogarithm). It converges on the disc $\left\{ z\in \CC_{\infty};\hbox{ }|z|_{\infty}<q^{\frac{nq}{q-1}} \right\}$.

\begin{definition}
Given any  $\mathfrak{s}=(s_{1},\ldots,s_{r})\in \NN^{r}$, we define its associated Carlitz multiple polylogarithm as the following:
\[ {\rm{Li}}_{\mathfrak{s}}(z_{1},\ldots,z_{r}):=\sum_{i_{1}>\cdots>i_{r}\geq 0} \frac{z_{1}^{q^{i_{1}}}\cdots z_{r}^{q^{i_{r}}}  }{\mathcal{L}_{i_{1}}^{s_{1}}\cdots \mathcal{L}_{i_{r}}^{s_{r}}   }  .\]
\end{definition}

Note that for any nonnegative integer $i$ and positive integer $n$, we have
\[ |\mathcal{L}_{i}^{n}|_{\infty}=q^{\frac{nq(q^{i}-1)}{q-1}}  .\]
So the absolute value of the general term in the series ${\rm{Li}}_{\mathfrak{s}}(z_{1},\ldots,z_{r})$ is given by
\begin{equation}\label{E:GeneralTermCMPL}
 q^{\frac{q}{q-1}(s_{1}+\cdots+s_{r})}|z_{1}/(\theta^{\frac{qs_{1}}{q-1}}  ) |_{\infty}^{q^{i_{1}}}\cdots |z_{r}/ (\theta^{\frac{qs_{r}}{q-1}}) |_{\infty}^{q^{i_{r}}}      .
\end{equation}

\begin{definition} Given $\mathfrak{s}=(s_{1},\ldots,s_{r})\in \NN^{r}$,
we denote by
\[\DD_{\mathfrak{s}}:=\left\{\bu=(u_{1},\ldots,u_{r})\in \CC_{\infty}^{r}| \Li_{\mathfrak{s}}(\bu) \hbox{ converges } \right\},\]
the convergence domain of $\Li_{\mathfrak{s}}$. Note that by non-archimedean analysis $\DD_{\mathfrak{s}}$ is described as
\[\DD_{\mathfrak{s}}=\left\{ (u_{1},\ldots,u_{r})\in \CC_{\infty}^{r}| |u_{1}/(\theta^{\frac{qs_{1}}{q-1}}  ) |_{\infty}^{q^{i_{1}}}\cdots |u_{r}/ (\theta^{\frac{qs_{r}}{q-1}}) |_{\infty}^{q^{i_{r}}}\rightarrow 0 \hbox{ as }0\leq  i_{r}< \cdots< i_{1}\rightarrow \infty   \right\} . \]
\end{definition}

\begin{remark}
It might be complicated to explicitly describe $\DD_{\mathfrak{s}}$ in terms of $s_{1},\ldots,s_{r}$, but by (\ref{E:GeneralTermCMPL}) it is clear that $\Li_{\mathfrak{s}}$ converges on this smaller polydisc:
\[\DD_{\mathfrak{s}}':= \left\{ (u_{1},\ldots,u_{r})\in \CC_{\infty}^{r}; \hbox{ }|u_{i}|_{\infty}< q^{\frac{s_{i}q}{q-1}} \hbox{ for }i=1,\ldots,r \right\}   .\]
\end{remark}

\begin{remark}
For any $\bu=(u_{1},\ldots,u_{r})\in \DD_{\mathfrak{s}}'\cap (\CC_{\infty}^{\times})^{r}$, using (\ref{E:GeneralTermCMPL}) the general term has a unique maximal absolute value when $(i_{1},\ldots,i_{r})=(r-1,\ldots,0)$. It follows that $\Li_{\mathfrak{s}}(\bu)$ is non-vanishing. But the author does not know whether or not $\Li_{\mathfrak{s}}(\bu)$ is non-vanishing for any $\bu$ in the convergence domain $\DD_{\mathfrak{s}}$.
\end{remark}

\subsection{Stuffle relations}\label{sub:stuffle}  Note that since the indexes of the series $\Li_{\mathfrak{s}}$ are in the total ordered set $\ZZ_{\geq 0}$, the classical stuffle relations for multiple polylogarithms work here, whence deducing some natural algebraic relations among the CMPLs at algebraic points (in $\DD_{\mathfrak{s}}\cap (\ok^{\times})^{r}$).  We explain more details as the following.

Given $\mathfrak{s}=(s_{1},\ldots,s_{r})\in \NN^{r}$ and $\mathfrak{s}'=(s_{1}',\ldots,s_{r'}')\in \NN^{r'}$, fix a positive integer $r''$ with ${\rm{max}}\left\{ r,r' \right\}\leq r''\leq r+r'$. We consider a pair consisting of two vectors $\bv, \bv'\in  \ZZ_{\geq 0}^{r''}$ which are required to satisfy $\bv+\bv'\in \NN^{r''}$ and which are obtained from the following ways. One vector $\bv$ is obtained  from $\mathfrak{s}$ by inserting $(r''-r)$ zeros in all possible ways (including in front and at end), and another vector $\bv'$ is obtained from $\mathfrak{s}'$ by inserting $(r''-r')$ zeros in all possible ways (including in front and at end).

 One observes from the definition of the series that the CMPLs satisfy the stuffle relations which are analogous to the classical case (cf. \cite{W02}):
\begin{equation}\label{E:stuffle}
  {\rm{Li}}_{\mathfrak{s}}({\bf{z}}){\rm{Li}}_{\mathfrak{s}'}({\bf{z}}')=\sum_{(\bv,\bv')}{\rm{Li}}_{\bv+\bv'}({\bf{z}}''),
\end{equation}where the pair $(\bv,\bv')$ runs over all the possible expressions as above for all $r''$ with ${\rm{max}}\left\{ r,r' \right\}\leq r''\leq r+r'$. For each such $\bv+\bv'\in \NN^{r''}$, the component $z_{i}''$ of ${\bf{z}''}$ is $z_{j}$ if the $i$th component of $\bv$ is $s_{j}$ and the $i$th component of $\bv'$ is $0$, it is $z_{\ell}'$ if the $i$th component of $\bv$ is $0$ and the $i$th component of $\bv'$ is $s_{\ell}'$, and finally it is $z_{j}z_{\ell}'$ if the $i$th component of $\bv$ is $s_{j}$ and the $i$th component of $\bv'$ is $s_{\ell}'$.

For example, for $r=r'=1$ (\ref{E:stuffle}) yields
\[ {\rm{Li}}_{s}(z){\rm{Li}}_{s'}(z')={\rm{Li}}_{(s,s')}(z,z')+ {\rm{Li}}_{(s',s)}(z',z)+{\rm{Li}}_{s+s'}(zz'). \]
 For $r=1,r'=2$, one has
\[
    \begin{array}{rl}
     {\rm{Li}}_{s}(z){\rm{Li}}_{(s_{1}',s_{2}')}(z_{1}',z_{2}')  &={\rm{Li}}_{(s,s_{1}',s_{2}')}(z,z_{1}',z_{2}')+ {\rm{Li}}_{(s_{1}',s,s_{2}')}(z_{1}',z,z_{2}')+{\rm{Li}}_{(s_{1}',s_{2}',s)}(z_{1}',z_{2}',z) \\
       & +{\rm{Li}}_{(s+s_{1}',s_{2}')}(z z_{1}',z_{2}')+ {\rm{Li}}_{(s_{1}',s+s_{2}')}(z_{1}',zz_{2}').\\
    \end{array}
  \]

\subsection{Special series and formulas}  Given a polynomial $Q:=\sum_{i}a_{i}t^{i}\in \bar{k}[t]$, we define $\|Q\|_{\infty}:={\rm{max}}_{i}\left\{|a_{i}|_{\infty} \right\}$. In what follows, we consider some specific series which are generalizations of the series associated to MZVs studied in \cite[\S\S~2.5]{AT09}.
\begin{lemma}\label{L:convergence}
Given a $d$-tuple $\mathfrak{s}=(s_{1},\ldots,s_{d})\in \NN^{d}$, let $\mathfrak{Q}:=(Q_{1},\ldots,Q_{d})\in \bar{k}[t]^{d}$ satisfy that as $0\leq i_{d}< \cdots< i_{1}\rightarrow \infty$,
\[\left(\parallel Q_{1}\parallel_{\infty}/|(\theta^{\frac{qs_{1}}{q-1}}  ) |_{\infty}\right)^{q^{i_{1}}}\cdots \left(\parallel Q_{d}\parallel_{\infty}/ |(\theta^{\frac{qs_{d}}{q-1}}) |_{\infty}\right)^{q^{i_{d}}}\rightarrow 0. \]
 We define the following series
\begin{equation}\label{E:LsQ}
    \begin{array}{rl}
     L_{\mathfrak{s},\mathfrak{Q}}(t) := & \sum_{i_{1}>\cdots>i_{d}\geq 0} \left(\Omega^{s_{d}}Q_{d} \right)^{(i_{d})}\cdots \left(\Omega^{s_{1}}Q_{1} \right)^{(i_{1})}\\\
       =& \Omega^{s_{1}+\cdots+s_{d}}\sum_{i_{1}>\cdots>i_{d}\geq 0} \frac{{Q_{d}}^{(i_{d})}(t)\cdots Q_{1}^{(i_{1})}(t) }{ \left( (t-\theta^{q})\ldots(t-\theta^{q^{i_{d}}})  \right)^{s_{d}}\ldots \left((t-\theta^{q})\ldots(t-\theta^{q^{i_{1}}})  \right)^{s_{1}}} \\
    \end{array}
 \end{equation}
associated to the two $d$-tuples $\mathfrak{s}$ and $\mathfrak{Q}$. Then $L_{\mathfrak{s},\mathfrak{Q}}$  is an entire function.
\end{lemma}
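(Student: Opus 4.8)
The plan is to show directly that $L_{\mathfrak{s},\mathfrak{Q}}(t)$, written out as a power series $\sum_{n\geq 0} a_n t^n$ in $\power{\bar{k}}{t}$, lies in the ring $\mathcal{E}$, i.e.\ that $\sqrt[n]{|a_n|_\infty}\to 0$ and that the coefficients $a_n$ generate a finite extension of $k_\infty$. The second (algebraicity) condition is essentially formal: each partial sum $\sum_{i_1>\cdots>i_d\geq 0,\ i_1\leq M}(\Omega^{s_d}Q_d)^{(i_d)}\cdots(\Omega^{s_1}Q_1)^{(i_1)}$ is an entire function whose coefficients lie in the fixed finite extension of $k_\infty$ generated by $\tilde\theta$ and the coefficients of $Q_1,\ldots,Q_d$ (recall $\Omega\in\mathcal{E}$ and $\Omega^{(i)}$ has coefficients in $\FF_q(\tilde\theta)$-span of... actually in $k_\infty(\tilde\theta)$), and this property passes to the limit coefficient-wise. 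So the real content is the growth estimate on $|a_n|_\infty$, which I would get by first establishing that the series of entire functions defining $L_{\mathfrak{s},\mathfrak{Q}}$ converges \emph{uniformly on every bounded disc} of $\CC_\infty$; uniform limits of entire functions (in the non-archimedean setting, where convergence is controlled by the sup norm on discs) are entire, and then membership in $\mathcal{E}$ follows by the same argument as for $\Omega$ and for the MZV-series of \cite[\S\S~2.5]{AT09}.

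First I would fix notation for the building blocks. Write $g_j:=\Omega^{s_j}Q_j$, an entire function, and note the key \emph{twisting growth estimate}: for a fixed entire $g=\sum b_m t^m$, one has $|g^{(i)}(z)|_\infty \le \big(\sup_{m}|b_m|_\infty^{q^i} |z|_\infty^m\big)$-type bounds, so on the disc $|z|_\infty\le \rho$ the sup norm of $g^{(i)}$ grows at most like $C(\rho)^{q^i}$ for a constant $C(\rho)$ depending on $\rho$ and $g$ but \emph{not} on $i$, and moreover — crucially — using that $\Omega$ vanishes at $t=\theta^{q^i}$ one gets that $\Omega^{(i)}$ is \emph{small}: from $\Omega^{(-1)}=(t-\theta)\Omega$ one deduces $\Omega^{(i)}(t)=\Omega(t)\big/\prod_{j=1}^{i}(t-\theta^{q^j})^{-1}$, wait — more precisely $\Omega^{(i)} = \Omega \cdot \prod_{j=1}^i (t-\theta^{q^j})$ fails; rather iterating $\Omega^{(-1)}=(t-\theta)\Omega$ gives $\Omega = (t-\theta^q)\Omega^{(1)}$, hence $\Omega^{(i)} = \Omega\big/\prod_{j=1}^{i}(t-\theta^{q^j})$, so on a fixed bounded disc $\|\Omega^{(i)}\|$ decays like $q^{-\,\text{(something)}\,q^i/(q-1)}$. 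Combined with $\|Q_j^{(i)}\|_\infty = \|Q_j\|_\infty^{q^i}$, the general term $(g_d)^{(i_d)}\cdots (g_1)^{(i_1)}$ on a fixed disc is bounded, up to a fixed constant, by
\[
\prod_{e=1}^{d}\Big(\|Q_e\|_\infty\big/|\theta^{\frac{qs_e}{q-1}}|_\infty\Big)^{q^{i_e}}\cdot(\text{lower-order factors}),
\]
which is exactly the quantity assumed to tend to $0$ as $0\le i_d<\cdots<i_1\to\infty$ in the hypothesis.

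Next I would assemble the convergence of the multi-sum. The hypothesis says the displayed product over $e=1,\ldots,d$ tends to $0$ along all chains $i_1>\cdots>i_d$; I want this to be summable. Here I would argue that once this product is $<1$ for all $i_1$ large (say $i_1\ge M_0$, uniformly in the smaller indices, which follows because the $e=1$ factor $(\|Q_1\|_\infty/|\theta^{qs_1/(q-1)}|_\infty)^{q^{i_1}}$ dominates and forces $\|Q_1\|_\infty < |\theta^{qs_1/(q-1)}|_\infty$, giving geometric-in-$q^{i_1}$ decay), the tail $\sum_{i_1>\cdots>i_d,\ i_1\ge M_0}$ is dominated by a convergent geometric-type series; the finitely many terms with $i_1<M_0$ are harmless. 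This gives uniform convergence on each bounded disc, hence $L_{\mathfrak{s},\mathfrak{Q}}$ is a well-defined entire function, and tracking the constants yields $\sqrt[n]{|a_n|_\infty}\to 0$, i.e.\ $L_{\mathfrak{s},\mathfrak{Q}}\in\mathcal{E}$.

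The main obstacle I anticipate is bookkeeping the estimates so that the decay is genuinely \emph{uniform} over the inner indices $i_2,\ldots,i_d$ when only $i_1\to\infty$: the hypothesis is phrased as a limit along the full chain $0\le i_d<\cdots<i_1\to\infty$, and one has to extract from it an honest summability statement. The clean way is to observe that the $e=1$ factor alone forces $\|Q_1\|_\infty/|\theta^{qs_1/(q-1)}|_\infty < 1$ (otherwise take $i_2,\ldots,i_d$ fixed and $i_1\to\infty$ to contradict the limit being $0$), which supplies a uniform geometric decay in $q^{i_1}$ that swamps everything else; iterating this observation downward through $e=2,\ldots,d$ handles the remaining indices. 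This is precisely the argument used for the MZV special series in \cite[\S\S~2.5]{AT09}, and the only new point is carrying the polynomial norms $\|Q_e\|_\infty$ in place of the constants there, which changes nothing essential.
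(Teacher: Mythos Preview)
Your direct-estimate approach is viable and would yield a correct proof, but it is not the route the paper takes. The paper argues in two steps: first, the same norm computation you sketch (carried out only on the closed unit disc) shows that $L_{\mathfrak{s},\mathfrak{Q}}\in\TT$; second, the paper explicitly constructs a lower-triangular matrix $\Phi\in\Mat_{d+1}(\ok[t])$ with $\det\Phi|_{t=0}\neq 0$ and a vector $\psi\in\Mat_{(d+1)\times 1}(\TT)$ whose last entry is $L_{\mathfrak{s},\mathfrak{Q}}$ and which satisfies $\psi^{(-1)}=\Phi\psi$, and then invokes \cite[Prop.~3.1.1]{ABP04}, which asserts that any $\TT$-solution of such a Frobenius difference system automatically lies in $\mathcal{E}$. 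This buys the paper two things your approach does not: it avoids tracking the $\rho$-dependent constants on arbitrary discs, and---more importantly for the sequel---it produces in the same stroke the matrices $\Phi$ and $\psi$ (displayed as (\ref{E:Phi}) and (\ref{E:psi})) that are reused verbatim in Proposition~\ref{P:CMPLMZ} to verify the \textit{MZ} property. Your approach is by contrast more self-contained analytically and does not rely on the black box from \cite{ABP04}, but you would still have to redo the construction of $\Phi,\psi$ later.

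One correction to your plan: the ``iterating downward'' step is both unnecessary and not quite right. You correctly observe that the hypothesis forces $\|Q_1\|_\infty<|\theta^{qs_1/(q-1)}|_\infty$, but the analogous inequality for $e\geq 2$ need \emph{not} follow from the hypothesis (the geometric decay in $q^{i_1}$ can absorb growth in the lower factors). Fortunately you do not need it: in the non-archimedean setting a series $\sum_\alpha x_\alpha$ over a countable index set converges as soon as for every $\epsilon>0$ only finitely many $|x_\alpha|_\infty>\epsilon$, and since for each $M$ there are only finitely many chains with $i_1\leq M$, the hypothesis ``$\to 0$ as $i_1\to\infty$'' already \emph{is} the required convergence criterion on each bounded disc. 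So once you have the bound $\|(\Omega^{s_e}Q_e)^{(i_e)}\|_\rho\le C(\rho)\,r_e^{q^{i_e}}$ (which your product formula $\Omega^{(n)}=\tilde\theta^{-q^{n+1}}\prod_{i>n}(1-t/\theta^{q^i})$ gives cleanly, with $C(\rho)$ independent of $n$), you are done immediately without any further induction on $e$.
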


\begin{proof}
Note that
\[
  \begin{array}{rl}
     & \frac{{\parallel{Q_{d}}^{(i_{d})}(t)\cdots Q_{1}^{(i_{1})}(t)\parallel_{\infty}}} { {\parallel\left( (t-\theta^{q})\ldots(t-\theta^{q^{i_{d}}})  \right)^{s_{d}}\ldots \left((t-\theta^{q})\ldots(t-\theta^{q^{i_{1}}})  \right)^{s_{1}}\parallel_{\infty}}}
 \\
    = &
q^{\frac{q}{q-1}(s_{1}+\cdots+s_{d})}\left(\parallel Q_{1}\parallel_{\infty}/|(\theta^{\frac{qs_{1}}{q-1}}  ) |_{\infty}\right)^{q^{i_{1}}}\cdots \left(\parallel Q_{d}\parallel_{\infty}/ |(\theta^{\frac{qs_{d}}{q-1}}) |_{\infty}\right)^{q^{i_{d}}} . \\
  \end{array}
 \] So the hypothesis of $\mathfrak{Q}$ implies that the series $L_{\mathfrak{s},\mathfrak{Q}}(t)$ is in $\TT$.  We claim that we can create a matrix $\Phi\in \Mat_{d+1}(\ok[t])$ and solve the system of difference equations $\psi^{(-1)}=\Phi\psi$ for $\psi\in\Mat_{(d+1)\times 1}(\power{\CC_{\infty}}{t})$ so that
\begin{enumerate}
\item[$\bullet$] $\det \Phi|_{t=0}\neq 0$ and all the entries of $\psi$ are in the Tate algebra $\TT$.
\item[$\bullet$] The last coordinate of $\psi$ is $L_{\mathfrak{s},\mathfrak{Q}}$.
\end{enumerate}
Note that the first property enables us to apply \cite[Prop.~3.1.1]{ABP04}. It follows that all the entries are actually in $\mathcal{E}$, and so is $L_{\mathfrak{s},\mathfrak{Q}}$ by the second property above.

To prove the claim above, we define
\begin{equation}\label{E:Phi}
\Phi:=\begin{pmatrix}
                (t-\theta)^{s_{1}+\cdots+s_{d}}  & 0 & 0 &\cdots  & 0 \\
                Q_{1}^{(-1)}(t-\theta)^{s_{1}+\cdots+s_{d}}  & (t-\theta)^{s_{2}+\cdots+s_{d}} & 0 & \cdots & 0 \\
                 0 &Q_{2}^{(-1)} (t-\theta)^{s_{2}+\cdots+s_{d}} &  \ddots&  &\vdots  \\
                 \vdots &  & \ddots & (t-\theta)^{s_{d}} & 0 \\
                 0 & \cdots & 0 & Q_{d}^{(-1)}(t-\theta)^{s_{d}} & 1 \\
               \end{pmatrix}
             \in \Mat_{d+1}(\ok[t]),
\end{equation}
and define the diagonal matrix
\[
\Lambda:=\left(
            \begin{array}{ccccc}
              \Omega^{s_{1}+\cdots+s_{d}} &  &  &  &  \\
               & \Omega^{s_{2}+\cdots+s_{d}} &  &  &  \\
               &  & \ddots &  &  \\
               &  &  & \Omega^{s_{d}} &  \\
               &  &  &  & 1 \\
            \end{array}
          \right)\in \Mat_{d+1}(\mathcal{E}).
\]For each $1\leq j\leq d$, we consider the two $j$-tuples $(s_{1},\ldots,s_{j})$ and $(Q_{1},\ldots,Q_{j})$ and define $L_{j+1}$ to be the series (\ref{E:LsQ}) associated to these two tuples. Then we put
 \[ L:=\left(
               \begin{array}{c}
                 1 \\
                 L_{2} \\
                 \vdots \\
                 L_{d+1} \\
               \end{array}
             \right)\in \Mat_{(d+1)\times 1}(\TT).
          \]and
          \begin{equation}\label{E:psi}
          \psi:=\Lambda L\in \Mat_{(d+1)\times 1}(\TT).
          \end{equation}
          Using the functional equation $\Omega^{(-1)}=(t-\theta)\Omega$ we see that  $\Phi$ and $\psi$ satisfy the desired claim (cf.~\cite[\S\S~2.5]{AT09}).
\end{proof}

The following lemma is the key formula so that Theorem~\ref{T:LinIndepPsitheta} applies to CMPLs at algebraic points.
\begin{lemma}\label{L:Lj+1 theta qN}
Given any $r$-tuple $\mathfrak{s}=(s_{1},\ldots,s_{r})\in \NN^{r}$, let $\bu=(u_{1},\ldots,u_{r})\in \DD_{\mathfrak{s}}\cap (\ok^{\times})^{r}$. For each $1\leq j\leq r,$ we let $L_{j+1}$ be the series defined in (\ref{E:LsQ}) associated to the two tuples $(s_{1},\ldots,s_{j})$ and $(u_{1},\ldots,u_{j})$. Then for each $1\leq j\leq r$, we have that
\[ L_{j+1}(\theta^{q^{N}})=\left( \Li_{(s_{1},\ldots,s_{j})}(u_{1},\ldots,u_{j})/\tilde{\pi}^{s_{1}+\cdots+s_{j}} \right)^{q^{N}}  \]
for any nonnegative integer $N$.
\end{lemma}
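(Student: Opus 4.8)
The plan is to reduce the statement to an exact evaluation of the twisted functions $\Omega^{(i)}$ at the special points $t=\theta^{q^{N}}$, and then read the identity off the series. Two preliminary facts: first, since twisting is a ring endomorphism fixing $t$, a straightforward induction from the functional equation $(\ref{E:Omega})$ yields the identity of entire functions $\Omega^{(i)}(t)\,\prod_{l=1}^{i}\bigl(t-\theta^{q^{l}}\bigr)=\Omega(t)$ for all $i\geq 0$; second, taking $\mathfrak{Q}=(u_{1},\dots ,u_{j})$ (constant polynomials, so $Q_{m}^{(i_{m})}=u_{m}^{q^{i_{m}}}$) the series $(\ref{E:LsQ})$ reads $L_{j+1}(t)=\sum_{i_{1}>\cdots>i_{j}\geq 0}\ \prod_{m=1}^{j}\bigl(\Omega^{(i_{m})}(t)\bigr)^{s_{m}}u_{m}^{q^{i_{m}}}$. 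By Lemma~\ref{L:convergence} the function $L_{j+1}$ lies in $\mathcal{E}$, hence converges at every $\theta^{q^{N}}$, and $\Li_{(s_{1},\dots ,s_{j})}(u_{1},\dots ,u_{j})$ converges because $\bu\in\DD_{\mathfrak{s}}$; together these make the term-by-term manipulations below legitimate.

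The heart of the proof is the evaluation of $\Omega^{(i)}(\theta^{q^{N}})$. From the displayed identity, for $0\leq i<N$ the numerator $\Omega(\theta^{q^{N}})$ vanishes (since $\Omega$ has a simple zero at each $\theta^{q^{m}}$, $m\geq 1$) while $\prod_{l=1}^{i}\bigl(\theta^{q^{N}}-\theta^{q^{l}}\bigr)\neq 0$, so $\Omega^{(i)}(\theta^{q^{N}})=0$. For $i\geq N$ one must cancel the common simple zero at $t=\theta^{q^{N}}$: the same identity gives $\Omega^{(i)}(t)=\Omega^{(N)}(t)\big/\prod_{l=N+1}^{i}\bigl(t-\theta^{q^{l}}\bigr)$, and here $\Omega^{(N)}(\theta^{q^{N}})=\Omega(\theta)^{q^{N}}=\tilde{\pi}^{-q^{N}}$ — writing $\Omega(t)=\sum_{n}a_{n}t^{n}$ one has $\Omega^{(N)}(\theta^{q^{N}})=\sum_{n}a_{n}^{q^{N}}\theta^{nq^{N}}=\sum_{n}\bigl(a_{n}\theta^{n}\bigr)^{q^{N}}=\bigl(\sum_{n}a_{n}\theta^{n}\bigr)^{q^{N}}$, the $q^{N}$-power map on $\CC_{\infty}$ being a continuous ring homomorphism. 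Combining this with $\prod_{l=N+1}^{i}\bigl(\theta^{q^{N}}-\theta^{q^{l}}\bigr)=\prod_{j=1}^{i-N}\bigl(\theta-\theta^{q^{j}}\bigr)^{q^{N}}=\mathcal{L}_{i-N}^{q^{N}}$ gives the clean formula $\Omega^{(i)}(\theta^{q^{N}})=\bigl(\tilde{\pi}\,\mathcal{L}_{i-N}\bigr)^{-q^{N}}$ for $i\geq N$.

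With this in hand I would substitute into $L_{j+1}(\theta^{q^{N}})$. Since $i_{j}$ is the smallest index, the factor $\bigl(\Omega^{(i_{j})}(\theta^{q^{N}})\bigr)^{s_{j}}$ kills every term with $i_{j}<N$, so only tuples with $i_{1}>\cdots>i_{j}\geq N$ survive; the shift $i_{m}\mapsto i_{m}-N$ reindexes them by all $i_{1}>\cdots>i_{j}\geq 0$, and a surviving term then equals $\prod_{m=1}^{j}\bigl(\tilde{\pi}\,\mathcal{L}_{i_{m}}\bigr)^{-s_{m}q^{N}}u_{m}^{q^{i_{m}+N}}=\Bigl(\tilde{\pi}^{-(s_{1}+\cdots+s_{j})}\prod_{m=1}^{j}u_{m}^{q^{i_{m}}}/\mathcal{L}_{i_{m}}^{s_{m}}\Bigr)^{q^{N}}$. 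Summing over all such tuples and pulling the $q^{N}$-th power outside the convergent sum (again by continuity of Frobenius) gives $L_{j+1}(\theta^{q^{N}})=\bigl(\Li_{(s_{1},\dots ,s_{j})}(u_{1},\dots ,u_{j})/\tilde{\pi}^{s_{1}+\cdots+s_{j}}\bigr)^{q^{N}}$, which is the asserted formula.

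The main obstacle is the exact evaluation $\Omega^{(i)}(\theta^{q^{N}})=(\tilde{\pi}\,\mathcal{L}_{i-N})^{-q^{N}}$ — in particular the identity $\Omega^{(N)}(\theta^{q^{N}})=\Omega(\theta)^{q^{N}}$ — because naively both $\Omega$ and the product $\prod_{l=1}^{i}(t-\theta^{q^{l}})$ vanish at $t=\theta^{q^{N}}$, and the correct normalization only emerges after cancelling the common zero and using that the $q^{N}$-power map commutes with the convergent power series. Once this is isolated, what remains is bookkeeping with the nested summation and recognizing the surviving sum as the defining series of $\Li_{(s_{1},\dots ,s_{j})}$.
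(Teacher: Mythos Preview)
Your proof is correct and follows essentially the same line as the paper's: both split the sum according to whether $i_{j}<N$ or $i_{j}\geq N$, show the former terms vanish at $t=\theta^{q^{N}}$, and evaluate the surviving part via the identity $f^{(N)}(\theta^{q^{N}})=f(\theta)^{q^{N}}$. The only organizational difference is that you compute $\Omega^{(i)}(\theta^{q^{N}})$ in closed form first and then assemble $L_{j+1}(\theta^{q^{N}})$ term by term, whereas the paper argues the vanishing of $L_{j+1}^{<N}(\theta^{q^{N}})$ by an order-of-vanishing count on the factored form and then applies the twist identity once to the whole tail $L_{j+1}^{\geq N}=(L_{j+1})^{(N)}$.
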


\begin{proof}
For the case $N=0$, the result follows from the second expression of $L_{j+1}$ in (\ref{E:LsQ}) and the definition of $\Li_{(s_{1},\ldots,s_{j})}(u_{1},\ldots,u_{j})$. Now, let $N$ be a positive integer.  Fixing $1\leq j\leq r$, we write $L_{j+1}=L_{j+1}^{< N}+L_{j+1}^{\geq N}$, where
\[
     \begin{array}{c}
        L_{j+1}^{< N}(t):= \sum_{{\tiny{
                                   \begin{array}{c}
                                     i_{1}>\cdots>  i_{j}\geq 0; \\
                                     i_{j}<N \\
                                   \end{array}
                                 }}
        }  \left(\Omega^{s_{j}}u_{j} \right) ^{(i_{j})}\ldots \left(  \Omega^{s_{1}}u_{1} \right) ^{(i_{1})} \\
        L_{j+1}^{\geq N}(t):= \sum_{i_{1}>\cdots>  i_{j}\geq N} \left(  \Omega^{s_{j}}u_{j} \right) ^{(i_{j})}\ldots \left(  \Omega^{s_{1}}u_{1} \right) ^{(i_{1})} .\\
     \end{array}
  \]

Now we express $L_{r+1}^{< N}(t)$  as
\[L_{j+1}^{< N}(t)=\sum_{{\tiny{
                                   \begin{array}{c}
                                     i_{1}>\cdots>  i_{j}\geq 0; \\
                                     i_{j}<N \\
                                   \end{array}
                                 }}
        }  \frac{ \Omega^{s_{1}+\cdots+s_{j}} u_{j}^{q^{i_{j}}}\ldots u_{1}^{q^{i_{1}}}  }{ \left( (t-\theta^{q})\ldots(t-\theta^{q^{i_{j}}})  \right)^{s_{j}}\ldots \left((t-\theta^{q})\ldots(t-\theta^{q^{i_{1}}})  \right)^{s_{1}}    }.    \]

We claim that $L_{j+1}^{< N}(\theta^{q^{N}})=0$. To prove this claim, we first note that the order of vanishing of $\Omega^{s_1+\cdots+s_{j}}$ at $t=\theta^{q^{N}}$ is equal to $s_{1}+\cdots+s_{j}$. On the other hand, we observe that each term in the expression of $L_{j+1}^{< N}(t)$ above may have pole at $t=\theta^{q^{N}}$ of order at most $s_{1}+\cdots+s_{j-1}$ since $i_{j}<N$. It follows that each term in the expression of $L_{j+1}^{< N}(t)$ above has positive order of vanishing at $t=\theta^{q^{N}}$, whence the claim.

Therefore, we have that $L_{j+1}(\theta^{q^{N}})=L_{j+1}^{\geq N}(\theta^{q^{N}})$ (which we will see from the following that the series $L_{j+1}^{\geq N}$ converges at $t=\theta^{q^{N}}$). By definition, we express $L_{j+1}^{\geq N}$ as
\[  L_{j+1}^{\geq N}(t)= \left(\sum_{i_{1}>\cdots>  i_{j}\geq 0} \left(  \Omega^{s_{j}}u_{j} \right) ^{(i_{j})}\ldots \left(  \Omega^{s_{1}}u_{1} \right) ^{(i_{1})} \right)^{(N)},\]
and hence
\[   L_{j+1}^{\geq N}(\theta^{q^{N}})= \left(\sum_{i_{1}>\cdots>  i_{j}\geq 0} \left(  \Omega^{s_{j}}u_{j} \right) ^{(i_{j})}\ldots \left(  \Omega^{s_{1}}u_{1} \right) ^{(i_{1})}  |_{t=\theta}\right)^{q^{N}}=\left( \frac{\Li_{(s_{1},\ldots,s_{j})}(u_{1},\ldots,u_{j})}{\tilde{\pi}^{s_{1}+\cdots+s_{j}}}  \right)^{q^{N}}.\]
\end{proof}

\subsection{Linear independence result}

The following proposition establishes that a nonzero value which is a specialization of CMPL at an algebraic point satisfies the {\it{MZ}} property.
\begin{proposition}\label{P:CMPLMZ}
Given any $r$-tuple $\mathfrak{s}=(s_{1},\ldots,s_{r})\in \NN^{r}$, we let $\bu=(u_{1},\ldots,u_{r})\in\DD_{\mathfrak{s}}\cap (\bar{k}^{\times})^{r}$. If $\Li_{\mathfrak{s}}(\bu)$ is nonzero,  then $\Li_{\mathfrak{s}}(\bu)$ has the MZ property with weight $\sum_{i=1}^{r}s_{i}$.
\end{proposition}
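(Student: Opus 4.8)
The plan is to build the matrix $\Phi$ and the vector $\psi$ explicitly, using the data $\mathfrak{Q}:=(u_1,\ldots,u_r)\in\bar{k}[t]^r$ (constant polynomials) together with Lemma~\ref{L:convergence} applied to the $d$-tuple $\mathfrak{s}$ itself (so $d=r$ there, and the resulting matrix from \eqref{E:Phi} is $(r+1)\times(r+1)$). First I would verify that the convergence hypothesis of Lemma~\ref{L:convergence} is met: since $\bu\in\DD_{\mathfrak{s}}$, the quantities $\|Q_i\|_\infty=|u_i|_\infty$ satisfy exactly the decay condition $\bigl(|u_1|_\infty/|\theta^{qs_1/(q-1)}|_\infty\bigr)^{q^{i_1}}\cdots\bigl(|u_r|_\infty/|\theta^{qs_r/(q-1)}|_\infty\bigr)^{q^{i_r}}\to 0$ as $0\le i_r<\cdots<i_1\to\infty$, which is precisely the description of $\DD_{\mathfrak{s}}$. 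Hence Lemma~\ref{L:convergence} produces $\Phi\in\Mat_{r+1}(\bar{k}[t])$ of the form \eqref{E:Phi} and $\psi=\Lambda L\in\Mat_{(r+1)\times 1}(\mathcal{E})$ with $L=(1,L_2,\ldots,L_{r+1})^{\mathrm{tr}}$, satisfying $\psi^{(-1)}=\Phi\psi$, and with $L_{j+1}$ the series from \eqref{E:LsQ} attached to the truncated tuples $(s_1,\ldots,s_j)$ and $(u_1,\ldots,u_j)$.

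Next I would check the four conditions of Definition~\ref{Def:DefMZ} with $w=s_1+\cdots+s_r$ and $d=r+1\ge 2$. Condition (2) is immediate: the last column of $\Phi$ in \eqref{E:Phi} is $(0,\ldots,0,1)^{\mathrm{tr}}$. For the ABP hypothesis in (1), one reads off $\det\Phi=(t-\theta)^{s_1+\cdots+s_r}\cdot(t-\theta)^{s_2+\cdots+s_r}\cdots(t-\theta)^{s_r}\cdot 1=(t-\theta)^{s}$ for $s=\sum_{j=1}^{r}(s_j+\cdots+s_r)$, a nonnegative integer, with leading constant $1$; so $\Phi$ meets the ABP requirement. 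For (3): since $\Lambda$ has last diagonal entry $1$ and the other diagonal entries $\Omega^{s_1+\cdots+s_j}$, we get $\psi(\theta)=(\Omega(\theta)^{w},\ldots,\Omega(\theta)^{s_r}L_r(\theta),L_{r+1}(\theta))^{\mathrm{tr}}$; using $\tilde\pi=1/\Omega(\theta)$ the first entry is $1/\tilde\pi^{w}$, and by Lemma~\ref{L:Lj+1 theta qN} with $N=0$ the last entry is $\Li_{\mathfrak{s}}(\bu)/\tilde\pi^{w}$, so taking $c=1\in k^\times$ (or absorbing any needed scalar) gives the required shape $(1/\tilde\pi^{w},\ldots,c\,\Li_{\mathfrak{s}}(\bu)/\tilde\pi^{w})^{\mathrm{tr}}$. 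For (4): evaluate at $t=\theta^{q^N}$ for $N\in\NN$. Each non-final diagonal entry of $\Lambda$ is $\Omega^{s_1+\cdots+s_j}$, which vanishes at $t=\theta^{q^N}$ because $\Omega$ has a simple zero there; hence the first $r$ entries of $\psi(\theta^{q^N})$ are $0$. The last entry is $L_{r+1}(\theta^{q^N})$, which by Lemma~\ref{L:Lj+1 theta qN} equals $(\Li_{\mathfrak{s}}(\bu)/\tilde\pi^{w})^{q^N}=(c\,\Li_{\mathfrak{s}}(\bu)/\tilde\pi^{w})^{q^N}$; so $\psi(\theta^{q^N})$ has the required form with zeros above the last entry.

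Having checked (1)--(4), the proof concludes: $(\Phi,\psi,\Li_{\mathfrak{s}}(\bu))$ witnesses that $\Li_{\mathfrak{s}}(\bu)$ has the {\it{MZ}} property with weight $\sum_{i=1}^{r}s_i$. I do not expect any serious obstacle here; the substantive analytic work (entireness of $L_{\mathfrak{s},\mathfrak{Q}}$, the vanishing of the truncated sum $L_{j+1}^{<N}$ at $t=\theta^{q^N}$, and the identification of $L_{j+1}^{\ge N}(\theta^{q^N})$ as a $q^N$-th power of $\Li/\tilde\pi^{\,\bullet}$) has already been isolated in Lemmas~\ref{L:convergence} and~\ref{L:Lj+1 theta qN}. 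The only mild points requiring care are bookkeeping: matching the order-of-vanishing count in $\det\Phi$ against the nonnegative-integer requirement of the ABP-criterion, and confirming that the simple zero of $\Omega$ at each $\theta^{q^N}$ forces exactly the first $r$ coordinates of $\psi(\theta^{q^N})$ to vanish while the last survives — both of which follow directly from the shape of $\Lambda$ and the difference equation $\Omega^{(-1)}=(t-\theta)\Omega$.
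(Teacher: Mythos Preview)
Your proposal is correct and follows essentially the same route as the paper's proof: both take $\mathfrak{Q}=(u_1,\ldots,u_r)$, invoke Lemma~\ref{L:convergence} to obtain the pair $(\Phi,\psi)$ from \eqref{E:Phi}--\eqref{E:psi}, and then verify conditions (1)--(4) of Definition~\ref{Def:DefMZ} using Lemma~\ref{L:Lj+1 theta qN} together with the simple zero of $\Omega$ at each $\theta^{q^N}$. The only cosmetic slip is the indexing of the diagonal entries of $\Lambda$ (they are $\Omega^{s_j+\cdots+s_r}$, not $\Omega^{s_1+\cdots+s_j}$), but your displayed $\psi(\theta)$ and the vanishing argument are correct regardless.
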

\begin{proof}
Let $\mathfrak{Q}:=(u_{1},\ldots,u_{r})$ and consider the series $L_{\mathfrak{s},\mathfrak{Q}}(t)$ be defined as (\ref{E:LsQ}). Let $\Phi$ be defined as (\ref{E:Phi}) and $\psi$ be defined as (\ref{E:psi}). Notice that by Lemma~\ref{L:convergence}
\[ \psi=\left(
          \begin{array}{c}
            \Omega^{s_{1}+\cdots+s_{r}} \\
            \Omega^{s_{2}+\cdots+s_{r}}L_{2} \\
            \vdots \\
            \Omega^{s_{r}}L_{r} \\
            L_{r+1}\\
          \end{array}
        \right)\in \Mat_{(r+1)\times 1}(\mathcal{E}),
  \] where $L_{j+1}$ is the series (\ref{E:LsQ}) associated to the two tuples $(s_{1},\ldots,s_{j})$ and $(u_{1},\ldots,u_{j})$. By the constructions of $\Phi$ and $\psi$ the first two properties of Definition~\ref{Def:DefMZ} are satisfied.

  By Lemma~\ref{L:Lj+1 theta qN} we see that the third property of Definition~\ref{Def:DefMZ} is satisfied. Note that $\Omega$ has a simple zero at $t=\theta^{q^{N}}$ for each $N\in \NN$ and so Lemma~\ref{L:Lj+1 theta qN} implies that $\psi(\theta^{q^{N}})$ satisfies the last property of Definition~\ref{Def:DefMZ}, whence completing the proof.

\end{proof}

\begin{definition}
Given any $r$-tuple $\mathfrak{s}=(s_{1},\ldots,s_{r})\in \NN^{r}$, let $Z$ be a nonzero value as a specialization of $\Li_{\mathfrak{s}}$ at some algebraic point in $\DD_{\mathfrak{s}}\cap (\ok^{\times})^{r}$. We define the weight of $Z$ to be ${\rm{wt}}(Z):=s_{1}+\cdots+s_{r}$. Let $Z_{1},\ldots,Z_{n}$ be nonzero values as specializations of some CMPLs at algebraic points. We define the weight of the monomial $Z_{1}^{m_{1}}\ldots Z_{n}^{m_{n}}$ to be
\[ \sum_{i=1}^{n}m_{i}{\rm{wt}}(Z_{i}) .\]
\end{definition}

Let $\overline{\mathfrak{M}}_{w}$ (resp. $\mathfrak{M}_{w}$) be the $\ok$-vector space (resp. $k$-vector space) spanned by the total weight $w$ monomials of CMPLs at algebraic points, and $\overline{\mathfrak{M}}$ (resp. $\mathfrak{M}$) be the $\ok$-algebra (resp. $k$-algebra) generated by all CMPLs at algebraic points. Note that the stuffle relation (cf.~\S~\ref{sub:stuffle}) implies that $\overline{\mathfrak{M}}_{w_{1}}\overline{\mathfrak{M}}_{w_{2}}\subset \overline{\mathfrak{M}}_{w_{1}+w_{2}}$. By applying Theorem~\ref{T:LinIndepPsitheta} we obtain the following result.
\begin{theorem}\label{T:Main Thm2}
Let $w_{1},\ldots,w_{\ell}$ be $\ell$ distinct positive integers. Let $V_{i}$ be a finite set consisting of weight $w_{i}$ monomials of some nonzero values as  specializations of Carlitz multiple polylogarithms at algebraic points for $i=1,\ldots,\ell$. If $V_{i}$ is a linearly independent set over $k$, then
\[ \left\{1  \right\} \bigcup_{i=1}^{\ell} V_{i}  \]
is linearly independent over $\bar{k}$. In particular, we have that
\begin{enumerate}
\item $\overline{\mathfrak{M}}$ is a graded algebra, i.e.,  $ \overline{\mathfrak{M}}=\bar{k}\oplus_{w\in \NN}\overline{\mathfrak{M}}_{w}$.
\item $\overline{\mathfrak{M}}$ is defined over $k$ in the sense that the canonical map $\ok\otimes_{k}\mathfrak{M}\rightarrow \overline{\mathfrak{M}}$ is bijective.
\end{enumerate}
\end{theorem}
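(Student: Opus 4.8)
The plan is to deduce Theorem~\ref{T:Main Thm2} directly from Theorem~\ref{T:LinIndepPsitheta} by checking that each element in the sets $V_i$ fits into the framework of the \emph{MZ} property, and then reading off the two ``In particular'' consequences from the resulting $\bar k$-linear independence. First I would recall that, by Proposition~\ref{P:CMPLMZ}, any nonzero value $\Li_{\mathfrak{s}}(\bu)$ with $\bu\in\DD_{\mathfrak{s}}\cap(\bar k^\times)^r$ has the \emph{MZ} property with weight $s_1+\cdots+s_r$, and that by Proposition~\ref{P:monomialsMZ} any nonzero monomial $Z_1^{m_1}\cdots Z_n^{m_n}$ in such values again has the \emph{MZ} property, with weight $\sum_i m_i\,\mathrm{wt}(Z_i)$. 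Hence each element of $V_i$ is a nonzero value in $\overline{k_\infty}^\times$ having the \emph{MZ} property, and by construction all of them share the common weight $w_i$ (this is exactly how ``weight $w_i$ monomials'' was defined). Thus the hypotheses of Theorem~\ref{T:LinIndepPsitheta} are met verbatim: the $w_i$ are $\ell$ distinct positive integers, each $V_i$ is a finite set of \emph{MZ}-values of weight $w_i$, and each $V_i$ is $k$-linearly independent by assumption. Applying Theorem~\ref{T:LinIndepPsitheta} gives that $\{1\}\cup\bigcup_{i=1}^{\ell}V_i$ is linearly independent over $\bar k$, which is the main assertion.

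For part (1), I would argue that the canonical surjection $\bar k\oplus\bigoplus_{w\in\NN}\overline{\mathfrak{M}}_w\twoheadrightarrow\overline{\mathfrak{M}}$ is injective. A nonzero element of the kernel would be a finite $\bar k$-linear relation $c_0\cdot 1+\sum_{i=1}^{\ell}\sum_{j}c_{ij}M_{ij}=0$ where the $M_{ij}$ are weight-$w_i$ monomials of CMPLs at algebraic points with the $w_i$ distinct; by discarding monomials with zero coefficient and choosing, inside each occurring weight, a $k$-basis $V_i$ of the $k$-span of the $M_{ij}$ present and re-expressing, one obtains a nontrivial $\bar k$-linear relation among $\{1\}\cup\bigcup_i V_i$ with each $V_i$ $k$-linearly independent — contradicting the main assertion. (Here I use that each $\overline{\mathfrak{M}}_w$ is spanned by the weight-$w$ monomials and that a $k$-spanning set of a finite-dimensional $k$-space contains a $k$-basis; one also needs that $\overline{\mathfrak{M}}$ is the $\bar k$-span of all such monomials, which holds since the product of two weight-graded monomials is again such a monomial via the stuffle relations, already noted as $\overline{\mathfrak{M}}_{w_1}\overline{\mathfrak{M}}_{w_2}\subset\overline{\mathfrak{M}}_{w_1+w_2}$.) For part (2), surjectivity of $\ok\otimes_k\mathfrak{M}\to\overline{\mathfrak{M}}$ is clear since $\overline{\mathfrak{M}}$ is generated over $\bar k$ by CMPL-values which already lie in $\mathfrak{M}$; for injectivity, pick a $k$-basis of $\mathfrak{M}$ consisting of monomials of CMPLs at algebraic points (possible because $\mathfrak{M}$ is spanned by such monomials and one can extract a basis from a spanning set), and grade it by weight — then part (1) applied with each $V_i$ a finite subset of this basis of the relevant weight shows any such $k$-basis stays $\bar k$-linearly independent, so the tensor map sends a $k$-basis to a $\bar k$-linearly independent set, hence is injective.

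A couple of small technical points I would flag rather than belabor: first, one should make sure the chosen $k$-bases are genuinely made of monomials of CMPLs at algebraic points (not arbitrary $k$-linear combinations), since Theorem~\ref{T:LinIndepPsitheta} requires each $V_i$ to consist of values with the \emph{MZ} property; this is fine because the monomials span both $\mathfrak{M}_w$ and $\mathfrak{M}$, so a basis can be chosen among them. Second, one must check that these $k$-bases can be taken finite on each graded piece — which is immediate if $\mathfrak{M}_w$ is finite-dimensional over $k$, and in any case any particular relation involves only finitely many monomials, so the finiteness hypothesis of Theorem~\ref{T:LinIndepPsitheta} is never an obstruction.

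I do not expect a genuine ``hard part'' here: the theorem is essentially a bookkeeping consequence of Theorem~\ref{T:LinIndepPsitheta} together with Propositions~\ref{P:CMPLMZ} and \ref{P:monomialsMZ}. If anything, the one place deserving care is the passage from ``a $\bar k$-linear relation among arbitrary weight-graded monomials'' to ``a $\bar k$-linear relation among $\{1\}\cup\bigcup_i V_i$ with each $V_i$ $k$-independent'' in the proof of (1): one must re-express the monomials of a fixed weight in terms of a chosen $k$-basis of their $k$-span \emph{within that weight}, observe that the resulting coefficients of the basis elements are $\bar k$-linear combinations of the original $c_{ij}$ and hence not all zero (since the change-of-basis matrix is invertible over $k$), and then invoke Theorem~\ref{T:LinIndepPsitheta} for a contradiction. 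That is routine linear algebra, so I would present it tersely.
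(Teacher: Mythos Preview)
Your proposal is correct and follows exactly the paper's approach: invoke Proposition~\ref{P:CMPLMZ} and Proposition~\ref{P:monomialsMZ} to verify the \emph{MZ} property for every element of each $V_i$, then apply Theorem~\ref{T:LinIndepPsitheta}. The paper's proof is in fact even terser than yours, leaving the deduction of consequences (1) and (2) entirely implicit; your careful unpacking of those two points via extracting $k$-bases of monomials is sound and fills in what the paper omits.
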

\begin{proof}
By Proposition~\ref{P:CMPLMZ} each nonzero value as a specialization of Carlitz multiple polylogarithm at an algebraic point has the {\it{MZ}} property. It follows that by Proposition~\ref{P:monomialsMZ} each nontrivial monomial of such values has the {\it{MZ}} property. Therefore, the result follows from Theorem~\ref{T:LinIndepPsitheta}.
\end{proof}

\subsection{Application to MZVs}

\subsubsection{Review of Anderson-Thakur theory}

We put $D_{0}:=1$, and $D_{n}:=\prod_{i=0}^{n-1}( \theta^{q^{n}}-\theta^{q^{i}} )$ for $n\in \NN$. For any nonnegative integer $n$, we define the Carlitz factorial
\[ \Gamma_{n+1}:=\prod_{i}D_{i}^{n_{i}},  \]
where
\[ n=\sum n_{i}q^{i}\hbox{ }(0\leq n_{i}\leq q-1) \]
is the base $q$ expansion of $n$. The following theorem is the key ingredient to connect MZVs with CMPLs.
\begin{theorem}{\textnormal{(Anderson-Thakur, \cite[3.7.3, 3.7.4]{AT90} and \cite[2.4.1]{AT09})}}\label{T:Hs}
There exists a sequence of polynomials $H_{n}(t)\in A[t]$  $(n=0,1,2,\ldots)$ such that
\[ \left( H_{s-1}\Omega^{s} \right)^{(d)}(\theta)=\frac{\Gamma_{s}S_{d}(s)}{\tilde{\pi}^{s}}  \]
for all nonnegative integers $d$ and $s\in \NN$. Moreover, when one regards $H_{n}$ as a polynomial of $\theta$ over $\FF_{q}[t]$ then
\[ \deg_{\theta} H_{n} \leq \frac{n q}{q-1}.\]
\end{theorem}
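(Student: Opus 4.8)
The plan is to reduce the statement, by elementary manipulations with $\Omega$, to a purely algebraic identity about the Anderson--Thakur polynomials, and then to invoke their construction to settle it. First, iterating the functional equation $\Omega^{(-1)}=(t-\theta)\Omega$ --- apply the $1$-fold twist to get $\Omega=(t-\theta^{q})\Omega^{(1)}$ and induct --- gives $\Omega^{(d)}=\Omega\big/\prod_{j=1}^{d}(t-\theta^{q^{j}})$ for all $d\ge 0$. Specializing at $t=\theta$ and using $\Omega(\theta)=1/\tilde{\pi}$ together with $\mathcal{L}_{d}=\prod_{j=1}^{d}(\theta-\theta^{q^{j}})$ yields $(\Omega^{s})^{(d)}(\theta)=1/(\tilde{\pi}^{s}\mathcal{L}_{d}^{s})$. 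Since $H_{s-1}\in A[t]=\FF_{q}[\theta][t]$, the $d$-fold twist acts on its coefficients by $\theta\mapsto\theta^{q^{d}}$, so $(H_{s-1}\Omega^{s})^{(d)}(\theta)=H_{s-1}^{(d)}(\theta)\big/(\tilde{\pi}^{s}\mathcal{L}_{d}^{s})$, and the asserted formula becomes the $\tilde{\pi}$-free statement
\[ H_{s-1}^{(d)}(\theta)=\Gamma_{s}\,\mathcal{L}_{d}^{s}\,S_{d}(s)\qquad(s\in\NN,\ d\ge 0). \]

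Next I would construct the $H_{n}$. Following Anderson--Thakur, one defines $\sum_{n\ge 0}H_{n}(t)x^{n}/\Gamma_{n+1}$ to be the reciprocal of an explicit power series in $x$ whose coefficients lie in $A[t]$ and are supported on the $q$-power exponents; the constant term of the series is $1$, which (via the $d=0$ case above) pins down the normalization $H_{s-1}(\theta)=\Gamma_{s}$, and reading off the coefficient of $x^{n}$ produces a recursion of the form $H_{n}=\sum_{q^{i}\le n}a_{n,i}(t)\,H_{n-q^{i}}$ with $a_{n,i}(t)\in A[t]$ explicitly built from the $D_{i}$ and the Carlitz factorials. That $H_{n}$ indeed lies in $A[t]$ and satisfies $\deg_{\theta}H_{n}\le nq/(q-1)$ then follows by induction on $n$ from this recursion, using the standard divisibility and degree properties of the Carlitz factorials; this part is routine book-keeping.

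The heart of the matter --- and the step I expect to be the main obstacle --- is the algebraic identity $H_{s-1}^{(d)}(\theta)=\Gamma_{s}\mathcal{L}_{d}^{s}S_{d}(s)$ displayed above. I would prove it by induction on $d$: the case $d=0$ is the normalization $H_{s-1}(\theta)=\Gamma_{s}$, and for the inductive step one twists the defining generating-series recursion by $\theta\mapsto\theta^{q}$, specializes at $t=\theta$, and matches the result against the classical recursions satisfied by the power sums $S_{d}(s)$ over $A_{+}$ (Carlitz; see also Thakur \cite{Thakur04}), using $\mathcal{L}_{d}=(\theta-\theta^{q^{d}})\mathcal{L}_{d-1}$ and the factorization $\Gamma_{s}=\prod_{i}D_{i}^{s_{i}}$. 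The delicate point is that the relevant power-sum recursion couples different values of $s$ (mirroring the $x^{q^{i}}$-terms of the generating function), so the induction must run simultaneously over all $s$, and one must track that $\mathcal{L}_{d}^{s}S_{d}(s)$ is a polynomial in $\theta$ and $\theta^{q^{d}}$ of exactly the degree predicted by $H_{s-1}^{(d)}(\theta)$. Alternatively --- and this is the route actually taken in \cite{AT90} --- one realizes $H_{s-1}$ and $\Omega^{s}$ inside Anderson's $s$-th tensor power $C^{\otimes s}$ of the Carlitz module, where the term-wise identity expresses the compatibility between a rigid analytic trivialization and the period lattice of $C^{\otimes s}$; summing the identity over $d$ and using $\zeta_{A}(s)=\sum_{d\ge 0}S_{d}(s)$ then recovers $\Gamma_{s}\zeta_{A}(s)/\tilde{\pi}^{s}$ as a period of $C^{\otimes s}$, which is the original Anderson--Thakur theorem. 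In either approach the content beyond the present excerpt is precisely the explicit generating function for the $H_{n}$ together with the arithmetic of $\FF_{q}[\theta]$ power sums (or the $C^{\otimes s}$ period computation), which is the substantive input.
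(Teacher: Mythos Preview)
The paper does not prove this theorem at all: it is quoted verbatim as a result of Anderson--Thakur with the citations \cite[3.7.3, 3.7.4]{AT90} and \cite[2.4.1]{AT09}, and is then used as a black box in the proof of Theorem~\ref{T:CMPLsMZVs}. So there is no ``paper's own proof'' to compare against; what you have written is a sketch of how the original Anderson--Thakur argument is structured, not a comparison target.

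That said, your reduction to the $\tilde{\pi}$-free identity $H_{s-1}^{(d)}(\theta)=\Gamma_{s}\mathcal{L}_{d}^{s}S_{d}(s)$ is correct and is exactly the form in which the result is typically used (and is implicitly how the paper uses it in deriving (\ref{E:Lr+1theta}) and (\ref{E:GammaMZV})). Your outline of the two possible routes --- the generating-function/power-sum recursion on one hand, and the $C^{\otimes s}$ period interpretation on the other --- accurately reflects what is done in \cite{AT90} and \cite{AT09} respectively. The honest assessment is that the inductive power-sum approach you describe first is more of a plausibility sketch than a proof: the coupling across different $s$ that you flag is real, and making that induction close requires the explicit generating function and the Carlitz-factorial identities in a way that is not routine. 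The $C^{\otimes s}$ route is the one actually carried out in \cite{AT90}, and your description of it is accurate at the level of a summary. For the purposes of the present paper, simply citing the result (as the paper does) is the appropriate move.
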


\subsubsection{Connection between MZVs and CMPLs}
 The following result is a generalization of Anderson-Thakur \cite[\S\S~3.9]{AT90}.
\begin{theorem}\label{T:CMPLsMZVs}
 Given any $r$-tuple $\mathfrak{s}=(s_{1},\ldots,s_{r})\in \NN^{r}$, we let $H_{n}(t)\in A[t]$ be the polynomials in Theorem~\ref{T:Hs}. Let $S$ be the set of points $\bu=(u_{1},\ldots,u_{r})\in A^{r}$ with $u_{j}$ running over all coefficients of $H_{s_{j}-1}\in A[t]$ for all $j=1,\ldots,r$. Note that based on Theorem~\ref{T:Hs} any point $\bu$ of $S$ belongs to $\DD_{\mathfrak{s}}$.  For each $\bu=(u_{1},\ldots,u_{r})\in S$, let $u_{j}$ correspond to the coefficient of $t^{m_{j}}$ in $H_{s_{j}-1}$ and put
\[a_{\bu}=\prod_{j=1}^{r}\theta^{m_{j}}=\theta^{m_{1}+\cdots+m_{r}} . \] Then we have
\[\zeta_{A}(s_{1},\ldots,s_{r})=\frac{1}{\Gamma_{s_{1}}\cdots\Gamma_{s_{r}}} \sum_{\bu\in S} a_{\bu}\Li_{\mathfrak{s}}(\bu).\]
\end{theorem}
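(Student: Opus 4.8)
The plan is to expand each Anderson--Thakur polynomial $H_{s_j-1}$ as $H_{s_j-1}(t)=\sum_{m_j} u_{j,m_j} t^{m_j}$ with $u_{j,m_j}\in A$, and then to substitute the formula of Theorem~\ref{T:Hs} into the definition of $\zeta_A(s_1,\ldots,s_r)$. Recall that by definition
\[ \zeta_A(s_1,\ldots,s_r)=\sum_{i_1>\cdots>i_r\geq 0} S_{i_1}(s_1)\cdots S_{i_r}(s_r), \]
while Theorem~\ref{T:Hs} gives $S_d(s)=\tfrac{\tilde\pi^{s}}{\Gamma_s}\bigl(H_{s-1}\Omega^{s}\bigr)^{(d)}(\theta)$. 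So the first step is to replace each factor $S_{i_\ell}(s_\ell)$ in the sum by $\tfrac{\tilde\pi^{s_\ell}}{\Gamma_{s_\ell}}\bigl(H_{s_\ell-1}\Omega^{s_\ell}\bigr)^{(i_\ell)}(\theta)$, pulling the constant $\tfrac{1}{\Gamma_{s_1}\cdots\Gamma_{s_r}}$ out front and absorbing $\tilde\pi^{s_1+\cdots+s_r}$.

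Next I would push the twisting operators through the product. Writing $H_{s_\ell-1}=\sum_{m_\ell} u_{\ell,m_\ell}t^{m_\ell}$ and using that the $(i)$-fold twist acts $\FF_q[t]$-linearly and raises $A$-coefficients to the $q^{i}$ power, one gets
\[ \bigl(H_{s_\ell-1}\Omega^{s_\ell}\bigr)^{(i_\ell)}(\theta)=\sum_{m_\ell} u_{\ell,m_\ell}^{q^{i_\ell}}\bigl(\Omega^{s_\ell}\bigr)^{(i_\ell)}(\theta); \]
here I am using that each $t^{m_\ell}$ is fixed by twisting and then specialized at $t=\theta$ it becomes $\theta^{m_\ell}$, but more carefully the $\theta^{m_\ell}$ must be tracked as the $a_{\bu}$ factor. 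The cleanest bookkeeping is to index the resulting multiple sum by tuples $\bu=(u_1,\ldots,u_r)\in S$ where $u_j$ ranges over the coefficients of $H_{s_j-1}$: expanding the product $\prod_{\ell}\bigl(H_{s_\ell-1}\Omega^{s_\ell}\bigr)^{(i_\ell)}(\theta)$ over all such tuples and collecting, the coefficient $t^{m_j}$-contribution produces $\theta^{m_j\cdot ?}$; after twisting by $(i_j)$ and evaluating, one sees $\theta^{m_j}$ becomes $\theta^{m_j q^{i_j}}$, which does \emph{not} immediately factor as $a_{\bu}^{q^{\text{something}}}$. The resolution is to compare with the series defining $\Li_{\mathfrak s}(\bu)$: by Lemma~\ref{L:Lj+1 theta qN} (case $N=0$) one has
\[ \Li_{(s_1,\ldots,s_r)}(u_1,\ldots,u_r)/\tilde\pi^{s_1+\cdots+s_r}=\sum_{i_1>\cdots>i_r\geq 0}\bigl(\Omega^{s_r}u_r\bigr)^{(i_r)}\!\cdots\bigl(\Omega^{s_1}u_1\bigr)^{(i_1)}\Big|_{t=\theta}, \]
and $\bigl(\Omega^{s_\ell}u_\ell\bigr)^{(i_\ell)}(\theta)=u_\ell^{q^{i_\ell}}\bigl(\Omega^{s_\ell}\bigr)^{(i_\ell)}(\theta)$ exactly because $u_\ell\in A$, so $u_\ell^{(i_\ell)}=u_\ell^{q^{i_\ell}}$. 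Multiplying through by $\tilde\pi^{s_1+\cdots+s_r}$ (it cancels) and summing over $\bu\in S$ weighted by $a_{\bu}$ is then designed precisely to reassemble $\sum_{\bu\in S}a_{\bu}u_\ell^{q^{i_\ell}}=\bigl(H_{s_\ell-1}\bigr)^{(i_\ell)}$-type expressions, matching the MZV series term by term.

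So the key steps, in order: (i) substitute the Anderson--Thakur identity $S_d(s)=\tilde\pi^{s}\Gamma_s^{-1}(H_{s-1}\Omega^s)^{(d)}(\theta)$ into the series for $\zeta_A(\mathfrak s)$; (ii) expand each $H_{s_\ell-1}$ in powers of $t$, distribute the twist $(i_\ell)$ over the sum, and reindex the resulting $r$-fold sum by $\bu\in S$, carefully tracking the monomial $t^{m_j}\mapsto \theta^{m_j}$ contributions into the scalar $a_{\bu}=\theta^{m_1+\cdots+m_r}$; (iii) recognize the inner sum over $i_1>\cdots>i_r\geq 0$ for fixed $\bu$ as exactly $\tilde\pi^{s_1+\cdots+s_r}\,\Li_{\mathfrak s}(\bu)$ via the $N=0$ case of Lemma~\ref{L:Lj+1 theta qN}, using $u_\ell^{(i_\ell)}=u_\ell^{q^{i_\ell}}$ for $u_\ell\in A$; (iv) collect the overall constant $1/(\Gamma_{s_1}\cdots\Gamma_{s_r})$ and cancel $\tilde\pi^{s_1+\cdots+s_r}$ to land on the stated formula. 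I expect the main obstacle to be step (ii): making the reindexing by $\bu\in S$ rigorous requires being precise that the product $(H_{s_1-1}\Omega^{s_1})^{(i_1)}\cdots(H_{s_r-1}\Omega^{s_r})^{(i_r)}$, once each $H$ is written as a sum of its monomials, distributes into a sum over $r$-tuples of coefficients, and that the monomial exponents $m_j$ are carried outside the twist correctly (so that $a_{\bu}$ appears as an honest element of $A$ independent of the $i_\ell$, rather than getting raised to powers of $q$). Once the bookkeeping is set up with the $\theta$-monomial of $H_{s_j-1}$ pulled out \emph{before} twisting, the interchange of the (absolutely convergent, in $k_\infty$) summations over $\bu$ and over $(i_1,\ldots,i_r)$ is routine, and everything else is the formal identity already packaged in Lemma~\ref{L:Lj+1 theta qN}.
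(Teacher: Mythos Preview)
Your approach is correct and is essentially the paper's own proof, which packages the same computation through the series $L_{\mathfrak{s},\mathfrak{Q}}$ of (\ref{E:LsQ}) with $\mathfrak{Q}=(H_{s_1-1},\ldots,H_{s_r-1})$: the paper evaluates $L_{r+1}(\theta)$ in two ways---once via Theorem~\ref{T:Hs} to obtain $\Gamma_{s_1}\cdots\Gamma_{s_r}\zeta_A(\mathfrak{s})/\tilde\pi^{s_1+\cdots+s_r}$, and once by expanding the $H$-polynomials in $t$ to obtain $\sum_{\bu\in S}a_{\bu}\Li_{\mathfrak{s}}(\bu)/\tilde\pi^{s_1+\cdots+s_r}$---which is exactly your steps (i)--(iv) without the $L$-series wrapper.

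One clarification to your step~(ii): your worry that ``$\theta^{m_j}$ becomes $\theta^{m_j q^{i_j}}$'' is unfounded, and the self-correction about pulling the monomial out \emph{before} twisting is unnecessary. The twist fixes $t$, so $(u_{j,m_j}t^{m_j})^{(i_j)}=u_{j,m_j}^{q^{i_j}}t^{m_j}$; evaluating at $t=\theta$ gives $u_{j,m_j}^{q^{i_j}}\theta^{m_j}$ with $\theta^{m_j}$ already independent of $i_j$. Hence $a_{\bu}=\theta^{m_1+\cdots+m_r}$ factors cleanly out of the sum over $(i_1,\ldots,i_r)$, and the only remaining point is the interchange of the finite sum over $\bu\in S$ with the convergent series, which is immediate.
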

\begin{proof}
We first note that by Theorem~\ref{T:Hs} we have $\parallel H_{s_{j}-1}\parallel_{\infty}<{q}^{\frac{s_{j}q}{q-1}}$ for $j=1,\ldots,r$. We take \[\mathfrak{Q}:=\left(H_{s_{1}-1},H_{s_{2}-1},\ldots,H_{s_{r}-1}\right)\in A[t]^{r}\]
and so the series $L_{\mathfrak{s},\mathfrak{Q}}(t)$ defined in (\ref{E:LsQ}) is an entire function by Lemma~\ref{L:convergence}. By the definition of $L_{r+1}$ we have
\[
 L_{r+1}(t)=\Omega^{s_{1}+\cdots+s_{r}}(t)\sum_{i_{1}>\cdots>i_{r}\geq 0} \frac{H_{s_{r}-1}^{(i_{r})}(t)\cdots H_{s_{1}-1}^{(i_{1})}(t) }{ \left( (t-\theta^{q})\ldots(t-\theta^{q^{i_{r}}})  \right)^{s_{r}}\ldots \left((t-\theta^{q})\ldots(t-\theta^{q^{i_{1}}})  \right)^{s_{1}}         } .
 \]
So Theorem~\ref{T:Hs} implies that
\begin{equation}\label{E:Lr+1theta}
 L_{r+1}(\theta)=\frac{\Gamma_{s_{1}}\cdots\Gamma_{s_{r}}\zeta_{A}(s_{1},\ldots,s_{r})  }{\tilde{\pi}^{s_{1}+\cdots+s_{r}}   }.
 \end{equation}
We write
\[ L_{r+1}(t)/\Omega^{s_{1}+\cdots+s_{r}}(t)=\sum_{i_{1}>\cdots>i_{r}\geq 0} \frac{H_{s_{r}-1}^{(i_{r})}(t)\cdots H_{s_{1}-1}^{(i_{1})}(t) }{ \left( (t-\theta^{q})\ldots(t-\theta^{q^{i_{r}}})  \right)^{s_{r}}\ldots \left((t-\theta^{q})\ldots(t-\theta^{q^{i_{1}}})  \right)^{s_{1}}         } ,\]
and so combining with (\ref{E:Lr+1theta}) we have
\begin{equation}\label{E:GammaMZV}
\Gamma_{s_{1}}\cdots\Gamma_{s_{r}}\zeta_{A}(s_{1},\ldots,s_{r})=\sum_{i_{1}>\cdots>i_{r}\geq 0} \frac{H_{s_{r}-1}^{(i_{r})}(\theta)\cdots H_{s_{1}-1}^{(i_{1})}(\theta) }{ \left( (\theta-\theta^{q})\ldots(\theta-\theta^{q^{i_{r}}})  \right)^{s_{r}}\ldots \left((\theta-\theta^{q})\ldots(\theta-\theta^{q^{i_{1}}})  \right)^{s_{1}}         }.
\end{equation}

Note that the RHS of (\ref{E:GammaMZV}) equals $\sum_{\bu\in S}a_{\bu}\Li_{\mathfrak{s}}(\bu)$. By dividing $\Gamma_{s_{1}}\cdots\Gamma_{s_{r}}$ on the both sides of (\ref{E:GammaMZV}) we obtain the desired formula of $\zeta_{A}(s_{1},\ldots,s_{r})$.
\end{proof}

\begin{corollary}\label{Cor:DescentMZVs}
Given any $(s_{1},\ldots,s_{r})\in \NN^{r}$, we have that if $\zeta_{A}(s_{1},\ldots,s_{r})/\tilde{\pi}^{s_{1}+\cdots+s_{r}}\in \ok$, then $\zeta_{A}(s_{1},\ldots,s_{r})/\tilde{\pi}^{s_{1}+\cdots+s_{r}}\in k$.
\end{corollary}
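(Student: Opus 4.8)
The plan is to reduce the statement to the descent property established in Proposition~\ref{P:DescentMZ}. By Theorem~\ref{T:CMPLsMZVs}, the value $\zeta_{A}(s_{1},\ldots,s_{r})$ is a $k$-linear combination $\frac{1}{\Gamma_{s_{1}}\cdots\Gamma_{s_{r}}}\sum_{\bu\in S}a_{\bu}\Li_{\mathfrak{s}}(\bu)$ of the values $\Li_{\mathfrak{s}}(\bu)$ at the algebraic (indeed integral) points $\bu\in S\subset A^{r}\cap\DD_{\mathfrak{s}}$, with coefficients $a_{\bu}/(\Gamma_{s_{1}}\cdots\Gamma_{s_{r}})\in k$. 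Write $w:=s_{1}+\cdots+s_{r}$. Discarding from $S$ those points at which $\Li_{\mathfrak{s}}$ vanishes, the remaining values $\Li_{\mathfrak{s}}(\bu)$ are nonzero elements of $\overline{k_{\infty}}^{\times}$, and by Proposition~\ref{P:CMPLMZ} each of them has the {\it{MZ}} property with weight $w$. Thus $\zeta_{A}(s_{1},\ldots,s_{r})$ lies in the $k$-span of finitely many nonzero values $Z_{1},\ldots,Z_{n}$, each having the {\it{MZ}} property with weight $w$.

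Now suppose $\zeta_{A}(s_{1},\ldots,s_{r})/\tilde{\pi}^{w}\in\ok$. Then $\tilde{\pi}^{w}$ and $\zeta_{A}(s_{1},\ldots,s_{r})$ are linearly dependent over $\ok$, hence $\tilde{\pi}^{w}, Z_{1},\ldots,Z_{n}$ are linearly dependent over $\ok$ (using the $k$-linear, a fortiori $\ok$-linear, expression of $\zeta_{A}(s_{1},\ldots,s_{r})$ in the $Z_{i}$). By Proposition~\ref{P:DescentMZ}, $\tilde{\pi}^{w}, Z_{1},\ldots,Z_{n}$ are then linearly dependent over $k$. It remains to extract from this a $k$-linear relation with a nonzero coefficient on $\tilde{\pi}^{w}$. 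This is where the argument needs a little care: a priori the $k$-linear dependence among $\tilde{\pi}^{w}, Z_{1},\ldots,Z_{n}$ could come entirely from a $k$-linear dependence among the $Z_{i}$ alone. To handle this cleanly, I would first pass to a maximal $k$-linearly independent subset of $\{Z_{1},\ldots,Z_{n}\}$, say (after reindexing) $\{Z_{1},\ldots,Z_{m}\}$, and rewrite $\zeta_{A}(s_{1},\ldots,s_{r})=\sum_{i=1}^{m}b_{i}Z_{i}$ with $b_{i}\in k$ not all zero (nonvanishing of the MZV by Thakur's result guarantees the expression is genuinely nontrivial). Since $Z_{1},\ldots,Z_{m}$ all have the {\it{MZ}} property with the same weight $w$ and are $k$-linearly independent, and $\zeta_{A}(s_{1},\ldots,s_{r})$ is itself a nonzero value, the MZV together with $Z_{1},\ldots,Z_{m}$ need not be independent, but by construction any $k$-linear relation among $\tilde{\pi}^{w},Z_{1},\ldots,Z_{m}$ forces the coefficient of $\tilde{\pi}^{w}$ to be nonzero (otherwise it would be a nontrivial $k$-relation among the $Z_{i}$, contradicting their independence). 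Applying Proposition~\ref{P:DescentMZ} to $\tilde{\pi}^{w},Z_{1},\ldots,Z_{m}$ yields such a $k$-linear relation, and solving for $\tilde{\pi}^{w}$ gives $\tilde{\pi}^{w}\in k\textnormal{-Span}\{Z_{1},\ldots,Z_{m}\}$; substituting back into $\zeta_{A}(s_{1},\ldots,s_{r})=\sum b_{i}Z_{i}$ shows $\zeta_{A}(s_{1},\ldots,s_{r})/\tilde{\pi}^{w}\in k$.

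The main obstacle is the bookkeeping in the last paragraph: one must be sure that the hypotheses of Proposition~\ref{P:DescentMZ} are met in the right form (the $Z_{i}$ should be chosen $\ok$-linearly independent, as the proposition requires, not merely $k$-linearly independent) and that the resulting $k$-linear relation genuinely involves $\tilde{\pi}^{w}$. I would therefore take as the $Z_{i}$ a maximal $\ok$-linearly independent subset of the nonzero values $\{\Li_{\mathfrak{s}}(\bu):\bu\in S,\ \Li_{\mathfrak{s}}(\bu)\neq 0\}$, express every $\Li_{\mathfrak{s}}(\bu)$ — and hence $\zeta_{A}(s_{1},\ldots,s_{r})$ — as an $\ok$-linear combination of them, apply Proposition~\ref{P:DescentMZ} to $\tilde{\pi}^{w},Z_{1},\ldots,Z_{n}$, and note that the $\ok$-independence of the $Z_{i}$ forces the coefficient of $\tilde{\pi}^{w}$ in the resulting $k$-relation to be nonzero; solving for $\tilde{\pi}^{w}$ and substituting completes the proof. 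No serious analytic input beyond Proposition~\ref{P:DescentMZ}, Proposition~\ref{P:CMPLMZ} and Theorem~\ref{T:CMPLsMZVs} is needed.
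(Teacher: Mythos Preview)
Your overall strategy---express $\zeta_A(\mathfrak{s})$ as a $k$-linear combination of CMPL values via Theorem~\ref{T:CMPLsMZVs}, invoke Proposition~\ref{P:CMPLMZ} for the {\it MZ} property, then apply Proposition~\ref{P:DescentMZ}---is exactly the paper's. But the final ``substituting'' step does not close. In your first variant you take $Z_{1},\ldots,Z_{m}$ maximally $k$-independent, write $\zeta_{A}=\sum b_{i}Z_{i}$ with $b_{i}\in k$, and obtain $\tilde{\pi}^{w}=\sum d_{i}Z_{i}$ with $d_{i}\in k$; the relation $\sum(\alpha d_{i}-b_{i})Z_{i}=0$ then only yields $\alpha\in k$ if the $Z_{i}$ are $\ok$-independent, which you have not arranged. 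In your second variant you take the $Z_{i}$ maximally $\ok$-independent, but then the coefficients $c_{i}$ expressing $\zeta_{A}$ in this basis lie only in $\ok$ (since the other $\Li_{\mathfrak{s}}(\bu)$ are only $\ok$-combinations of the $Z_{i}$), and $\alpha=c_{i}/d_{i}$ gives nothing new. Neither variant simultaneously secures $\ok$-independence of the basis and $k$-rationality of the coefficients of $\zeta_{A}$ in it.

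The clean fix, using only the ingredients the paper cites, is to put $\tilde{\pi}^{w}$ into the basis from the start: extend $\{\tilde{\pi}^{w}\}$ to a maximal $k$-linearly independent subset $B=\{\tilde{\pi}^{w},Z_{i_{1}},\ldots,Z_{i_{m}}\}$ of $\{\tilde{\pi}^{w},Z_{1},\ldots,Z_{n}\}$. By the contrapositive of Proposition~\ref{P:DescentMZ}, $B$ is $\ok$-linearly independent. Since every $Z_{j}$ lies in $k\textnormal{-Span}(B)$, so does $\zeta_{A}$: write $\zeta_{A}=\beta_{0}\tilde{\pi}^{w}+\sum_{j}\beta_{j}Z_{i_{j}}$ with $\beta_{0},\beta_{j}\in k$. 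Comparing with $\zeta_{A}=\alpha\tilde{\pi}^{w}$ ($\alpha\in\ok$) and using $\ok$-independence of $B$ forces $\alpha=\beta_{0}\in k$. Alternatively, your first variant can be rescued by invoking Theorem~\ref{T:LinIndepPsitheta} (with $\ell=1$) to upgrade $k$-independence of $Z_{1},\ldots,Z_{m}$ to $\ok$-independence---but that is an extra input the paper does not need here.
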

\begin{proof}
Put $w:=s_{1}+\cdots+s_{r}$. By Theorem~\ref{T:CMPLsMZVs} $\zeta_{A}(s_{1},\ldots,s_{r})$ is a $k$-linear combination of CMPLs at algebraic points of weight $w$. The result follows by  Proposition~\ref{P:CMPLMZ} and Proposition~\ref{P:DescentMZ}.
\end{proof}

\subsubsection{Proof of Theorem~\ref{T:Main Thm}}\label{sub:ProofMainThm} Now we give a proof of Theorem~\ref{T:Main Thm}. By Theorem~\ref{T:CMPLsMZVs} we have $\mathcal{Z}_{w}\subset \mathfrak{M}_{w}$ for each $w\in \NN$. So Theorem~\ref{T:Main Thm} follows from Theorem~\ref{T:Main Thm2}.

\begin{remark} We mention that after the results of this paper were obtained in 2012,  recently  Mishiba~\cite{M14} studied the MZVs of \lq\lq odd\rq\rq coordinates with certain restrictions and proved some algebraic independence results using Papanikolas' theory.  We also note that the series (\ref{E:LsQ}) with  specific $\mathfrak{Q}$ was first studied in \cite{P08, CY07, AT09} and later on was studied by Mishiba for $\mathfrak{Q}$ as a vector of polynomial entries with certain restriction on norms. However, in \cite{M14} the entireness property of $L_{\mathfrak{s},\mathfrak{Q}}$ is not studied.
\end{remark}

\bibliographystyle{alpha}

\end{document}